\documentclass[12pt]{iopart}

%Uncomment next line if AMS fonts required
\usepackage{iopams}  
\usepackage{amssymb,amsfonts}
\usepackage{amsthm}
\usepackage{tikz}
\usetikzlibrary{trees}
\usepackage{forest}
\usepackage{qtree} 

\eqnobysec

\newtheorem{thm}{Theorem}[section]
\newtheorem{cor}[thm]{Corollary}
\newtheorem{prop}[thm]{Proposition}
\newtheorem{lemma}[thm]{Lemma}

\newtheorem{defn}[thm]{Definition}

\newtheorem{exmp}[thm]{Example}

\newtheorem{rem}[thm]{Remark}

\newcommand{\T}{\mathbb{T}}
\newcommand{\eqd}{\stackrel{d}{=}}
\newcommand{\roott}{{\bf 0}}

\begin{document}

\title[]{Directed polymers on a disordered tree with a defect subtree}

\author{Neal Madras\footnote{Corresponding author}}
\address{Department of Mathematics and Statistics, York University\\ 4700 Keele Street, Toronto, Ontario M3J 1P3, Canada}
%\cortext[cor1]{Supported in part by a Discovery Grant from NSERC of Canada.}
%\fntext[cor1]{Supported in part by a Discovery Grant from NSERC of Canada.}
%\cortext[cor1]{Corresponding author}
\ead{madras@mathstat.yorku.ca}

\author{G\"{o}khan Y\i ld\i r\i m}
\address{Department of Mathematics, Bilkent University\\06800 Ankara, Turkey}
\ead{gokhan.yildirim@bilkent.edu.tr}

\begin{abstract}
We study the question of how the competition between \textit{bulk disorder} and a \textit{localized microscopic defect} affects the macroscopic behavior of a system in the directed polymer context at the free energy level. We consider the directed polymer model on a disordered $d$-ary tree and 
represent the localized microscopic defect by modifying the disorder distribution at each vertex in a
single path (branch), or  in a subtree, of the tree.
The polymer must choose between
following the microscopic defect and 
finding the best branches through the bulk disorder. We 
describe 
three possible phases, called the \textit{fully pinned, partially pinned} and \textit{depinned} phases. 
When the microscopic defect is 
associated only with a single branch, we compute the free energy and the critical curve of the model, 
and show that the partially pinned phase 
does not occur.
When the localized microscopic defect is
associated with a non-disordered 
regular 
subtree of the disordered tree, the picture is more complicated. 
We prove that all three phases are non-empty below a critical temperature, and that the partially 
pinned phase disappears above the critical temperature. 
\end{abstract}

%Uncomment for PACS numbers title message
%\pacs{00.00, 20.00, 42.10}
% Keywords required only for MST, PB, PMB, PM, JOA, JOB? 
\ams{82B44  82D60 60K35} 
\vspace{2pc}
\noindent{\it Keywords}: Directed polymers, free energy, bulk disorder, microscopic defect
%\ams{82B44  82D60 60K35} 
% Uncomment for Submitted to journal title message
%\submitto{\JPA}
% Comment out if separate title page not required
%\submitto{\jpg}
\maketitle

\section{Introduction}

Directed polymers in a random environment are typical examples of models used to study the behavior of a one-dimensional object interacting with a disordered environment. In the mathematical formulation of these models, paths of a directed walk on a regular lattice or tree represent the directed polymer while an independent and identically distributed (i.i.d.)\ collection of random variables attached to the vertices
of the lattice/tree correspond to the random environment (\textit{bulk disorder}). 
Each path is assigned a Gibbs weight corresponding to the sum of the random variables of the visited vertices.
The polymer's interaction with the random environment is controlled by a parameter, $\beta$, which represents the \textit{inverse temperature}. The main questions are whether there exist different phases in the model depending on the temperature which manifest the effect of the disorder on the large scale behavior (diffusive versus superdiffusive) of the polymer, and how the phases can be characterized \cite{Comets1}. 
The earliest example of the model studied in the physics literature \cite{HH} 
(and then rigorously in \cite{IS}) was a $1+1$ dimensional lattice case as a model for the interface in 
a two-dimensional Ising model with random exchange interaction. Since then it has been used in models of various growth phenomena: formation of magnetic domains in spin-glasses \cite{HH}, vortex lines in superconductors \cite{Nel}, roughness of crack interfaces \cite{HHR}, and the KPZ equation \cite{KPZ}. The last twenty years have witnessed many significant results on the problems related to directed polymer models and more general 
polymer models. For a comprehensive introduction and an up-to-date summary of the results and methods for both the lattice and tree version of the directed polymer model, see the lecture notes \cite{Comets1}. For more general polymer models, see \cite{denH, Giacomin, Giacomin1}. 

A different direction of research considers polymers in a deterministic 
environment with a \textit{localized microscopic defect}.  
A primary example is the case of an interfacial layer between two fluids, 
modelled by a plane in a $3$-dimensional lattice (called a ``defect plane'' in some contexts),
such that each monomer of a polymer is energetically rewarded if it lies in this layer.
A related model is the situation that the monomers are attracted to an impenetrable 
wall of a container;
in this case, the polymer lives in a half-space bounded by an attracting plane.
There is typically a critical value of $\beta$ above which a positive fraction 
of the polymer is \textit{pinned} or \textit{adsorbed} to the surface, and below 
which the polymer is mostly free of the surface---that is, it is \textit{depinned} or \textit{desorbed}.
It is generally expected that the critical $\beta$ is strictly positive for an impenetrable 
boundary, and equal to zero for a penetrable boundary.  This problem can be solved 
exactly for directed polymers \cite{Giacomin1,Hamm1,Rubin}.  For the self-avoiding
walk model of polymers, the impenetrable result has been proven \cite{Hamm2}, while the 
penetrable case remains open, but can be proved under an extremely weak hypothesis
\cite{Madras}. 
In the very special case of self-avoiding walks at an impenetrable boundary on the honeycomb lattice, 
the exact critical value has been determined in \cite{BBDD}. 
Pinning problems also arise elsewhere, notably the context of high-temperature 
superconductors \cite{BSL, CMWT}.

\subsection{The bulk disorder versus a localized microscopic defect.}
\label{bulkvsmd}

In this paper, we shall study the question of how the competition between \textit{bulk disorder} and a \textit{localized microscopic defect} affects the macroscopic behavior of a system as reflected in pinning phenomena of directed polymers.
In the \textit{directed polymer on a disordered tree} model,  we add a fixed potential $u$ to each vertex 
on a \textit{branch} or a \textit{subtree} of the tree which represents the localized microscopic defect. 
Roughly speaking, the polymer must choose between following the localized microscopic defect and finding the best branch(es) through the bulk disorder. We see that there are three possible phases depending on the defect structure (a single branch versus a subtree) and the model parameters ($\beta, u$): 
\begin{itemize}
\item[-] \textbf{Fully pinned phase}, ${\cal R}_{FP}$: the partition function is dominated by polymer configurations that spend almost all their time in the defect structure. 
\item[-] \textbf{Depinned phase}, ${\cal R}_{D}$: the partition function is dominated by polymer configurations
that spend hardly any time in the defect structure.
\item[-] \textbf{Partially pinned phase}, ${\cal R}_{PP}$: the partition function is dominated by polymer configurations that spend a positive fraction (but not close to all) of their time inside the defect structure.
\end{itemize}
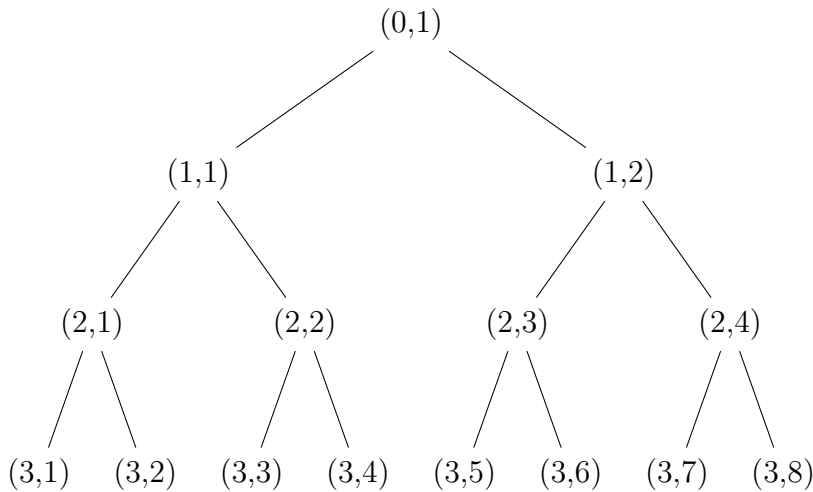
\begin{figure}
\begin{center}
\begin{forest}
  for tree={l+=0.8cm} % increase level distance
  [{(0,1)}
    [{(1,1)}[{(2,1)}[{(3,1)}][{(3,2)}]][{(2,2)}[{(3,3)}][{(3,4)}]]]
    [{(1,2)}[{(2,3)}[{(3,5)}][{(3,6)}]][{(2,4)}[{(3,7)}][{(3,8)}]]]
  ]
\end{forest}
\caption{The nodes of a $d$-ary tree $\T$ are labeled by two integers $(k,j)$ where $k$ corresponds to the generation and $j$
enumerates the nodes within the $k^{th}$ generation from left to right. The root is labeled as $\roott=(0,1)$.}
\label{tree}
\end{center}
\end{figure}
For the formal definition of each phase, see Definition~\ref{phase-defn} in section~\ref{subtreesecproof}.

In the (nonrigorous) physics literature, this problem has been studied extensively in the lattice version of the directed polymer model \cite{BK, HN, K2, TL} but there have been disagreeing predictions for the $1+1$ dimensional lattice version as to whether the polymer follows the defect line as soon as the potential level $u$ is greater than $0$; for more details see section~\ref{latticemodel}. 
For some rigorous partial results in this direction, see \cite{AlYil, BSSV, BSV}. We consider this problem in the tree version of the directed polymer model which can be viewed as a mean field approximation of the lattice case.

In order to study our problem precisely, we first present some definitions and introduce some notation related to the \textit{directed polymers on disordered trees}, a model introduced in \cite{DerSp}. Let $\T$ be a rooted $d$-ary tree, in which every node  has exactly $d$ offspring ($d\geq 2$). We label the nodes of $\T$ by two integers $(k,j)$ where $k$ corresponds to the generation and $j\in \{1,2,\cdots,d^k\}$ numbers the nodes within the $k^{th}$ generation from left to right. The root is labeled as $\roott=(0,1)$. See Figure~\ref{tree}. An infinite directed path from the root is called a \textit{branch} of the tree.

We assume that every node $x=(k,j)$ of the tree $\T$ has an associated random variable denoted by $V(x)$ or $V_{k,j}$ that represents the random disorder at that node, all independent. 

The Hamiltonian of the model is defined as
\[    V\langle W\rangle   \;:=\;  \sum_{y\in W\setminus \{\roott\}} V(y)  \,   \]
where $W$ is a directed path in $\T$ from the root $\roott$ to some node in the $n^{th}$ generation. 

In the \textit{homogeneous disorder} (HD) model,
all the random variables $V(x)$ have the same distribution as some 
non-degenerate random variable $V$ with 
\begin{equation}
\label{moment}
\lambda(\beta):=\log \mathbf{E}[e^{\beta V}] <\infty \hbox{ for all  } \beta \in \mathbb{R}.
\end{equation}

The partition function of the HD model is defined as
\begin{equation}
\label{hompartfunction}
   Z_n^{HD}(\beta)   \;:=\;   \sum_{W:\roott\rightarrow (n,\cdot)}   e^{\beta V\langle W\rangle}  
\end{equation}
where the sum is over all directed paths $W$ in $\T$ from the root $\roott$ to some node in the 
$n^{th}$ generation. The parameter $\beta$ represents the \textit{inverse temperature}.

The free energy of the HD model is defined to be 
\begin{equation}
\label{phi}
\phi(\beta):=\lim_{n\to \infty}\frac{1}{n}\log Z^{HD}_n(\beta).
\end{equation}
%For each $\beta$, this limit exists and is constant almost surely (i.e.\ with probability one)\cite{BPP}.
%We state the explicit formula for $\phi(\beta)$ from \cite{BPP} in  Equation~(\ref{qfree}) in section~\ref{homodisorder} below.
%
For each $\beta$, this limit exists and is constant almost surely.   It is computed explicitly in \cite{BPP} as
\begin{equation}
\label{qfree}
\phi(\beta)=\left\{\begin{array}{ll}
\lambda(\beta) +\log d  &\hbox{ if } \quad \beta <\beta_c\\
\frac{\beta}{\beta_c}(\lambda(\beta_c)+\log d)  &\hbox{ if } \quad \beta \geq \beta_c \,
\end{array}\right.
\end{equation}
where 
the critical inverse temperature $\beta_c$ is the positive root of 
$\lambda(\beta)+\log d \,=\, \beta \lambda'(\beta)$, or is $+\infty$ if there is no root; 
see Lemma~\ref{criticalbeta} in section~\ref{homodisorder} for details.

The defect structure is incorporated into the model by assigning random variables from a different distribution
to the vertices in a part of the tree. 
Let $\tilde{\T}$ be the ``left-most'' $d_1$-regular subtree of the $d$-regular tree $\T$, with the same 
 root $\roott$ (for the precise definition see the beginning of section~\ref{mainresults}). 
 We assume that there are two possible distributions for $V(x)$, 
which we shall call $V$ and $\tilde{V}$:
\begin{verse}
  If $x\in \tilde{\T}$, then $V(x)$ has distribution $\tilde{V}$.   \\
  If $x\in \T\setminus \tilde{\T}$, then $V(x)$ has distribution $V$.
\end{verse}
We assume that $V$ satisfies Equation (\ref{moment}).  We shall consider two special cases:
\begin{verse}
  \textbf{Case I (Shift defect):} There is a real constant $u$ such that the \
  distribution of $\tilde{V}$ is  $V+u$. 
    \\
  \textbf{Case II (Nonrandom defect):} There is a real constant $u$ such 
  that $\mathbf{P}(\tilde{V}=u)\,=\,1$. 
\end{verse}

\subsubsection{Polymers on non-disordered trees with a defect subtree.}
\label{non-disorderedtrsec-int}
As a first case, we shall consider a directed polymer model on a \textit{deterministic} $d$-regular tree $\T$, no bulk disorder, and identify the localized microscopic defect with a $d_1$-regular subtree $\tilde \T$ of $\T$ by 
placing a fixed potential $u$ at each vertex of $\tilde \T$ and potential 0 elsewhere in $\T$. 
That is, we have $\mathbf{P}(V=0)\,=\,1$ and $\mathbf{P}(\tilde{V}=u)\,=\,1$.

We define the free  energy as
\begin{equation}
\label{free-det}
f^{Det}(\beta,u):= \lim_{n\rightarrow\infty}\frac{1}{n}\log Z_n^{Det}(\beta,u)
\end{equation}
where $Z^{Det}_n(\beta,u)$ is the partition function of the model. Note that $f^{Det}(\beta,0) \,=\, \log d$.

The critical curve is defined as
\begin{equation}
\label{critical-det}
u^{Det}_c(\beta):=\inf\{u\in \mathbb{R}: f^{Det}(\beta,u)>\log d\}.
\end{equation}
The following result is straightforward to prove (see Section \ref{non-disorderedtrsec}):
\begin{thm}
\label{mainthm-det} For any $\beta\geq 0$ and $u\in \mathbb{R}$, we have
\begin{equation}  
\label{Detfree-int}
f^{Det}(\beta,u)\;=\;
  \max\{ \beta u+\log d_1,\log d\}
\end{equation}
and hence 
\begin{equation}
\label{Detcritic-int}
u^{Det}_c(\beta) \;=\;  \frac{\log (d/d_1)}{\beta}\,.
\end{equation}
\end{thm}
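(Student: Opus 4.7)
The plan is to compute $Z_n^{Det}(\beta,u)$ exactly by decomposing the sum over paths according to how long each path stays in the defect subtree $\tilde{\T}$. Since $\tilde{\T}$ is a subtree of $\T$ containing the root, a directed path from $\roott$ that leaves $\tilde{\T}$ at some step can never re-enter, so each path of length $n$ is characterized by an index $k\in\{0,1,\dots,n\}$ giving the number of non-root vertices it visits in $\tilde{\T}$ (equivalently, it stays in $\tilde{\T}$ for the first $k$ steps and is outside $\tilde{\T}$ for the remaining $n-k$ steps).

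Next I would count, for each $k$, the number of such paths and the corresponding Hamiltonian. For $k=n$ there are $d_1^n$ paths, each with Hamiltonian $nu$. For $0\le k\le n-1$, the path uses $d_1$ choices at each of the first $k$ steps, then $d-d_1$ choices at step $k+1$ (picking a child of its current $\tilde{\T}$-vertex outside $\tilde{\T}$), then $d$ choices at each of the remaining $n-k-1$ steps; its Hamiltonian is $ku$ since vertices outside $\tilde{\T}$ carry potential $0$. Therefore
\begin{equation*}
  Z_n^{Det}(\beta,u) \;=\; \bigl(d_1 e^{\beta u}\bigr)^n \;+\; (d-d_1)\sum_{k=0}^{n-1} \bigl(d_1 e^{\beta u}\bigr)^k d^{\,n-k-1}\,.
\end{equation*}

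Writing $a:=d_1 e^{\beta u}$, the sum is a geometric series and one checks by elementary estimates that
\begin{equation*}
  \frac{1}{n}\log Z_n^{Det}(\beta,u) \;\longrightarrow\; \max\{\log a,\log d\} \;=\; \max\{\beta u+\log d_1,\log d\},
\end{equation*}
which is (\ref{Detfree-int}); the cases $a>d$, $a<d$, $a=d$ are all handled by the obvious upper bound $Z_n\le (n+1)\max(a,d)^n\cdot\text{const}$ and the matching lower bound from the dominant term. For the critical curve, $f^{Det}(\beta,u)>\log d$ holds iff $\beta u+\log d_1>\log d$, i.e.\ iff $u>\log(d/d_1)/\beta$, giving (\ref{Detcritic-int}).

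There is no real obstacle here: the proof is a direct computation made possible by the fact that the environment is deterministic and that $\tilde{\T}$ is an absorbing complement (paths cannot re-enter after leaving). The only mild subtlety is the boundary case $\beta u+\log d_1=\log d$, but the polynomial prefactor picked up there washes out after dividing by $n$.
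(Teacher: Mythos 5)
Your proposal is correct and follows essentially the same route as the paper: the same exact decomposition of $Z_n^{Det}(\beta,u)$ according to the number $k$ of steps spent in $\tilde{\T}$, yielding the identical closed-form sum, followed by the same sandwich between the dominant single term and $(n+1)\max\{d_1e^{\beta u},d\}^n$. Nothing further to add.
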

We interpret the critical curve as follows.  When $u<u_c(\beta)$, then $f^{Det}(\beta,u)\,=\log d$,
which shows that the free energy is dominated by walks that are entirely (except for the root)
outside of $\tilde{\T}$; there are $(d-d_1)d^{n-1}$ such walks, each with weight 1 in the 
partition function.  This corresponds to the \textit{desorbed} or \textit{depinned} phase. 
In contrast, when $u>u_c(\beta)$, then $f^{Det}(\beta,u)\,=\beta u+\log d_1$,
which shows that the free energy is dominated by walks that are entirely 
in $\tilde{\T}$; there are $d_1^n$ such walks, each with weight $e^{\beta nu}$.
This corresponds to the \textit{fully adsorbed} or \textit{fully pinned} phase.

\subsubsection{Polymers on disordered trees with a defect branch.}
\label{dfbranchsec}

Next, we shall consider a directed polymer model on a 
$d$-regular tree $\T$ with bulk disorder and a one-dimensional microscopic shift defect.
Specifically, we identify the defect with the leftmost branch $\tilde \T$ of the tree $\T$
by adding a fixed potential $u$ to each vertex of $\tilde \T$; 
that is, the distribution of $\tilde{V}$ is  $V+u$.
See Figure~\ref{dbrfigure}. 
Therefore, for a directed path $W$ from the root to some node in the $n^{th}$ generation, 
the Hamiltonian is 
\[    V\langle W\rangle   \;:=\;  \sum_{y\in \tilde{\T}\,\cap\, (W\setminus \{\roott\})} (V(y)+u)  +
\sum_{y\in (\T\setminus \tilde \T)\, \cap \,(W\setminus \{\roott\})} V(y) \,.    
\]
\begin{figure}
\centering
\begin{tikzpicture}
\tikzstyle{level 1}=[sibling distance=40mm]
\tikzstyle{level 2}=[sibling distance=15mm]
\tikzstyle{level 3}=[sibling distance=3mm]
\node {root}
child {child{child child{edge from parent[thin]} child{edge from parent[thin]}} child{edge from parent[thin] child child child} child{edge from parent[thin] child  child child} edge from parent[ultra thick]}
child {child{child child child} child{child child child} child{child child child}}
child {child{child child child} child{child child child} child{child child child}};
\end{tikzpicture}
\caption{The thick edges represent the defect branch $\tilde{\T}$ of the tree $\T$. 
We assume that $V(x)$ has distribution $V$ for each $x\in \T\setminus \tilde{\T}$,  whereas $V(x)$ has distribution $\tilde{V}=V+u$ for each $x\in \tilde{\T}$. 
When $u>u_c(\beta)$, the polymer follows the defect branch.} 
\label{dbrfigure}
\end{figure}
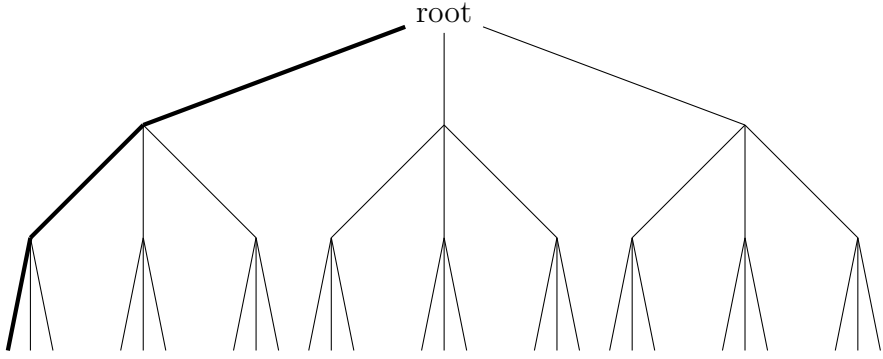

Then the free energy of the model is defined as 
\begin{equation}
\label{freeeng-int}
f^{Br}(\beta,u):=\lim_{n\to \infty}\frac{1}{n}\log Z^{Br}_n(\beta,u)
\end{equation}
where $Z^{Br}_n(\beta,u)$ is the partition function of the model, defined as in the right-hand side of
Equation (\ref{hompartfunction}).
The existence of the limit in Equation~(\ref{freeeng-int}) is part of the assertion of 
Theorem~\ref{mainthm} below.   Observe that $f^{Br}(\beta,0)\,=\,\phi(\beta)$ (recall Equation (\ref{phi})).

We define the critical curve as
\begin{equation}
u^{Br}_c(\beta):=\inf\{u\in \mathbb{R}: f^{Br}(\beta,u)>\phi(\beta)\}.
\end{equation}
In our next result, we compute the free energy and the critical curve explicitly. In the statement of the theorem, the quantity $\beta_c$ is the critical inverse temperature for the 
homogeneous disorder model, see Equation~(\ref{qfree}) and section~\ref{homodisorder}.
\begin{thm}
\label{mainthm} For any $\beta\geq 0$ and $u\in \mathbb{R}$, we have almost surely
\begin{equation}
f^{Br}(\beta,u)=\max\{\beta u+\beta\mu,\phi(\beta)\} 
\end{equation}
where $\mu=\mathbf{E}(V)$.
Hence
\begin{equation}
u^{Br}_c(\beta)=\left\{\begin{array}{ll}
\frac{1}{\beta}(\lambda(\beta) +\log d)-\mu &\hbox{ if }\quad \beta < \beta_c\\
\frac{1}{\beta_c}(\lambda(\beta_c)+\log d)-\mu &\hbox{ if }\quad \beta \geq \beta_c \,.
\end{array}\right.
\end{equation}
\end{thm}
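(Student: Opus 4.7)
The plan is to decompose the partition function according to the last step at which a path lies on the defect branch $\tilde{\T}$. Writing $V_j := V((j,1))$ for the disorder on the defect path and, for $k\geq 0$ and $m\geq 0$, setting
\[
B^{(k)}_m \;:=\; \sum_{c=2}^{d} e^{\beta V((k+1,c))}\, Z^{HD,(k,c)}_{m},
\]
where $Z^{HD,(k,c)}_m$ is the homogeneous partition function of the $d$-ary subtree rooted at the non-defect sibling $(k+1,c)$ restricted to $m$ further generations, one obtains
\[
Z^{Br}_n(\beta,u) \;=\; e^{\beta n u + \beta\sum_{j=1}^n V_j}
+ \sum_{k=0}^{n-1} e^{\beta k u + \beta\sum_{j=1}^k V_j}\, B^{(k)}_{n-k-1}.
\]
The subtrees feeding into different $B^{(k)}_{n-k-1}$ are pairwise disjoint and disjoint from $\tilde{\T}$, so these partition functions are mutually independent and independent of $(V_j)_{j\geq 1}$.

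For the lower bound I would retain only the first (``all-defect'') term and the $k=0$ summand separately. The strong law of large numbers applied to $(V_j)$ gives $n^{-1}[\beta n u + \beta\sum_{j=1}^n V_j]\to\beta(u+\mu)$ almost surely, and Equation~(\ref{qfree}) applied to the independent copies of $Z^{HD}_{n-1}$ appearing in $B^{(0)}_{n-1}$ yields $n^{-1}\log B^{(0)}_{n-1}\to\phi(\beta)$ almost surely. Hence $\liminf_n n^{-1}\log Z^{Br}_n \geq \max\{\beta(u+\mu),\phi(\beta)\}$ a.s.

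For the upper bound I would use $\log Z^{Br}_n \leq \log(n+1) + \max_{0\leq k\leq n}\log T_{n,k}$, where $T_{n,k}$ denotes the $k$-th summand in the decomposition above (with the all-defect term corresponding to $k=n$). A uniform SLLN argument gives, for any fixed $\eta>0$ and all $n$ large (a.s.), $\sum_{j=1}^k V_j\leq k\mu + n\eta$ for every $k\leq n$. The key analytic ingredient is an exponential tail bound: for every $\beta>0$ there exist constants $C,c>0$ (depending on $\beta$) such that
\[
\mathbf{P}\bigl(B^{(k)}_m > e^{m(\phi(\beta)+\epsilon)}\bigr) \;\leq\; C\, e^{-c m \epsilon}.
\]
For $\beta\leq\beta_c$ this is just Markov's inequality, since $\mathbf{E}[B^{(k)}_m]=O(e^{m(\lambda(\beta)+\log d)})$ and $\lambda(\beta)+\log d=\phi(\beta)$ by Equation~(\ref{qfree}). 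For $\beta>\beta_c$ the bound follows from the inequality $B^{(k)}_m(\beta)\leq B^{(k)}_m(\beta_c)^{\beta/\beta_c}$ (a consequence of $(\sum a_i)^t\geq\sum a_i^t$ for $a_i\geq 0$ and $t\geq 1$) combined with Markov at $\beta_c$ and the identity $\phi(\beta)=(\beta/\beta_c)\phi(\beta_c)$. Splitting the values of $k$ according to whether $n-k-1\geq\sqrt n$ (``bulk'' regime, handled by the tail bound and a Borel--Cantelli argument whose summability hinges on $\sum_n e^{-c\epsilon\sqrt n}<\infty$) or $n-k-1<\sqrt n$ (``boundary'' regime, handled by a crude Markov bound giving $\log B^{(k)}_{n-k-1}\leq n^{3/4}$), one obtains, for $n$ large (a.s.) and every $k\leq n$,
\[
\tfrac{1}{n}\log T_{n,k} \;\leq\; \tfrac{k}{n}(\beta u+\beta\mu) + \tfrac{n-k}{n}\phi(\beta) + \epsilon + \beta\eta + o(1).
\]
Since $\tfrac{k}{n}(\beta u+\beta\mu)+\tfrac{n-k}{n}\phi(\beta)$ is a convex combination of $\beta(u+\mu)$ and $\phi(\beta)$, the maximum over $k\in\{0,\dots,n\}$ is bounded by $\max\{\beta(u+\mu),\phi(\beta)\}+\epsilon+\beta\eta+o(1)$. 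Letting $\epsilon,\eta\downarrow 0$ completes the free-energy formula, and the critical curve formula follows by solving $\beta u+\beta\mu=\phi(\beta)$ for $u$ and substituting the explicit form of $\phi(\beta)$ from Equation~(\ref{qfree}).

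The hardest part is the simultaneous control of the family $\{B^{(k)}_{n-k-1}\}_{k=0}^{n-1}$ in the upper bound: a naive union bound over $k$ gives only an $O(1)$ probability for the relevant tail event and is thus not summable in $n$, so one must exploit the independence of the $B^{(k)}$'s together with the depth-based truncation at $\sqrt n$ in order to obtain summability. Extending the Markov-type tail bound from $\beta\leq\beta_c$ to all $\beta$ via the $(\beta/\beta_c)$-power trick is a further standard but essential refinement needed to cover the full range of temperatures.
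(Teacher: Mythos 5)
Your proposal is correct, and its skeleton coincides with the paper's: the same decomposition of $Z_n^{Br}$ by the departure generation $k$ from the defect branch (your $B^{(k)}_{n-k-1}$ is the paper's $G^{Br}_{k,n}(\beta)$), the same lower bound via the all-defect term plus one off-branch subtree together with the SLLN and Equation (\ref{qfree}), and the same $\sqrt{n}$ depth truncation in the upper bound with a crude Markov estimate for the shallow subtrees. Where you genuinely diverge is in the two tail estimates. For the branch sum $S_k$ the paper uses a Chernoff bound to make the deviation probabilities summable, whereas you use a uniform SLLN statement holding eventually almost surely; both work, since your argument only needs to intersect that a.s.\ event with the Borel--Cantelli event for the subtree terms. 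More substantively, for the off-branch partition functions the paper imports the Comets--Shiga--Yoshida concentration inequality (\ref{concentration}), which yields only a stretched-exponential tail $e^{-c(n-k)^{1/3}}$, while you obtain a genuinely exponential tail $Ce^{-cm\epsilon}$ from a first-moment bound combined with the inequality $Z(\beta)\leq Z(\beta_c)^{\beta/\beta_c}$ and the identity $\phi(\beta)=(\beta/\beta_c)\phi(\beta_c)$ for $\beta\geq\beta_c$; this is more elementary and self-contained, and is in fact the same power trick the paper itself deploys later (via Lemma 5 of \cite{BPP}) in the proof of Proposition \ref{prop-desorblargebeta} for the subtree model. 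One small correction: your closing remark overstates the role of independence of the $B^{(k)}$'s --- the union bound over $k$ in the bulk regime already gives $\sum_{k\leq n-\sqrt{n}}Ce^{-c(n-k-1)\epsilon}=O(e^{-c\epsilon\sqrt{n}})$, which is summable in $n$ without any independence; the depth truncation alone is what rescues summability, exactly as in the paper.
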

We also see that for this model the partially pinned phase,  ${\cal R}_{PP}$, is always empty.   Indeed, we show in the proof of Theorem~\ref{mainthm} (see Section 
\ref{dfbranchsec}) that $\beta u+\beta\mu$ is the contribution to the free energy from the path $W$
that lies in $\tilde{\T}$.
\begin{rem} 
Note that for a non-degenerate random variable $V$,  $e^{\lambda(\beta)}=\mathbf{E}(e^{\beta V})>e^{\beta \mathbf{E}(V)}=e^{\beta \mu}$. Therefore
\begin{equation}
\label{Dbcriticlb}
u^{Br}_c(\beta)>\left\{
\begin{array}{ll}
\frac{\log d}{\beta} &\hbox{ if } \quad \beta < \beta_c\\
\frac{\log d}{\beta_c} &\hbox{ if } \quad \beta\geq  \beta_c.
\end{array}\right.
\end{equation}
From Equations~(\ref{Detcritic-int}) (with $d_1=1$) and (\ref{Dbcriticlb}), we  see that quenched 
randomness shifts the critical curve, that is, $u^{Br}_c(\beta)>u^{Det}_c(\beta)$ for all $\beta>0$. 
\end{rem}

\subsubsection{Polymers on disordered trees with a non-disordered defect subtree.}
\label{subtreesec}
 In this section, we shall consider a different microscopic defect structure that is identified with a \textit{deterministic} $d_1$-regular subtree $\tilde \T$ of the $d$-regular tree $\T$, and we identify the bulk disorder with the vertices in $\T\setminus \tilde \T$; that is, there is a real constant $u$ such that 
 $\tilde{V}\equiv u$.  See Figure~\ref{dstfigure}.
Therefore for a directed path $W$ from the root to some node in the $n^{th}$ generation, 
the Hamiltonian is 
\[V\langle W\rangle   \;:=\;  \sum_{y\in \tilde{\T}\cap (W\setminus \{\roott\})} \!\!\! u   \;\;+
\sum_{y\in (\T\setminus \tilde \T)\cap (W\setminus \{\roott\})} \!\!\! V(y) \,.  
\]
We denote the partition function of the model with a defect subtree by $Z^{ST}_n(\beta,u)$. 
For this model, it is not obvious how to prove that the limiting free energy exists almost surely; see the beginning of section~\ref{subtreesecproof}.

\begin{figure}
\centering
\begin{tikzpicture}
\tikzstyle{level 1}=[sibling distance=40mm]
\tikzstyle{level 2}=[sibling distance=15mm]
\tikzstyle{level 3}=[sibling distance=3mm]
\node {root}
child {child{child child child{edge from parent[thin]}} child{child child child{edge from parent[thin]}} child{edge from parent[thin] child  child child} edge from parent[ultra thick]}
child {child{child child child{edge from parent[thin]}} child{child child child{edge from parent[thin]}} child{edge from parent[thin] child  child child} edge from parent[ultra thick]}
child {child{child child child} child{child child child} child{child child child}};
\end{tikzpicture}
\caption{The thick edges represent the defect subtree $\tilde{\T}$ of the  tree $\T$.    
Here, $d=3$ and $d_1=2$.  The disorder
$V(x)$ has distribution $V$ for each $x\in \T\setminus \tilde{\T}$,  whereas $V(x)$ has distribution $\tilde{V}$
for each $x\in \tilde{\T}$.  In Section 1.1.3, we assume that $\tilde{V}$ is almost surely constant, that is, $\tilde{V}\equiv u$.} 
\label{dstfigure}
\end{figure}
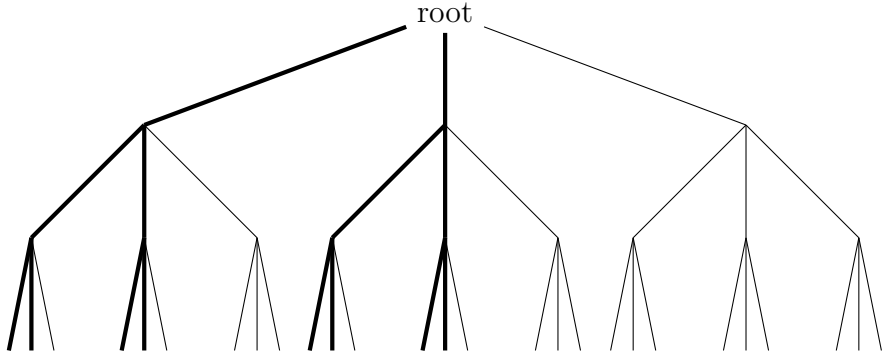

We shall define the following functions:  
 \begin{eqnarray}
      %\nonumber
      \label{functionF}
      F(\beta)   & := & \frac{1}{\beta}\,(\lambda(\beta)+\log d -\log d_1)   \\
      %\nonumber
      \label{functionJ}
      J(\beta) & := &   \frac{1}{\beta}\left( \phi(\beta)-\log d_1-
      \frac{[\lambda(\beta)+\log d-\phi(\beta)]\log d_1}{\lambda(2\beta)-2\lambda(\beta)-\log d}
         \right) .
 \end{eqnarray}

Our main result for this model is the following.   
%The functions $F$ and $J$ are defined in Equations(\ref{functionF}) and (\ref{functionJ}) in section~\ref{subtreesec}.    
The proof appears in section \ref{subtreesecproof}. See also Figure~\ref{phasediagramfig}. 
 
\begin{thm}
     \label{mainthST}
(a)  For every $\beta \in [0,\beta_c]$,  we have
\[     (\beta,u)    \;\in  \;  \left\{   \begin{array}{ll}
      {\cal R}_{FP}   & \hbox{ if }  \quad u\geq F(\beta)   \\
      {\cal R}_{D} & \hbox{ if } \quad u\leq F(\beta) \,.
         \end{array} \right.
\]
(b)
For every $\beta>\beta_c$, we have
\[     (\beta,u)    \;\in  \;  \left\{   \begin{array}{ll}
      {\cal R}_{FP}   & \hbox{ if } \quad  u\geq F(\beta)   \\
      {\cal R}_{D}  & \hbox{ if } \quad u\leq F(\beta_c)    \\
      {\cal R}_{PP}  & \hbox{ if } \quad J(\beta)<u<F(\beta)   \\
      {\cal R}_{D}\cup {\cal R_{PP}} & \hbox{ if } \quad F(\beta_c)<u\leq J(\beta).
      \end{array} \right.
\]
\end{thm}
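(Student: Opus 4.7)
The plan is to decompose $Z_n^{ST}$ according to $T_W := \max\{k : w_k \in \tilde{\T}\}$, the last generation at which the path $W$ is still in the defect subtree. This gives
\begin{equation}
Z_n^{ST} \;=\; d_1^n e^{\beta n u} \;+\; \sum_{k=0}^{n-1} e^{\beta k u}\, S_k,
\end{equation}
where $S_k$ is a sum of $d_1^k(d-d_1)$ independent copies of $e^{\beta V} Z_{n-k-1}^{HD}(\beta)$, one for each vertex at generation $k+1$ lying just outside $\tilde{\T}$ whose parent is in $\tilde{\T}$. The first term is the candidate fully-pinned contribution with rate $\beta u + \log d_1$, and keeping only a single $S_0$ term gives $\liminf_n n^{-1}\log Z_n^{ST} \geq \phi(\beta)$ a.s.\ via~(\ref{qfree}). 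A Markov inequality together with a union bound over $k$ yields the annealed upper bound
\[
\limsup_n \frac{1}{n}\log Z_n^{ST} \;\leq\; \max\{\beta u + \log d_1,\; \lambda(\beta) + \log d\} \quad \text{a.s.}
\]

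For part~(a), the identity $\phi(\beta) = \lambda(\beta)+\log d$ for $\beta \leq \beta_c$ collapses the two easy bounds to the same value. Comparing the exponential rates of the individual summands $A_k := e^{\beta k u} S_k$---each growing at rate $\alpha(\beta u + \log d_1) + (1-\alpha)\phi(\beta)$ with $\alpha := k/n$, a linear function of $\alpha$---then produces the FP/D dichotomy at $u = F(\beta)$ of Theorem~\ref{mainthST}(a).

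For part~(b), when $\beta > \beta_c$ the gap $\phi(\beta) < \lambda(\beta)+\log d$ forces a finer argument in the interior. The FP region $u \geq F(\beta)$ still follows from the easy bounds, and the D region $u \leq F(\beta_c)$ reduces to the algebraic check $\beta u + \log d_1 < \phi(\beta)$ (straightforward from $\phi(\beta) = (\beta/\beta_c)(\lambda(\beta_c)+\log d)$) combined with a quenched upper bound on $\sum_k e^{\beta k u} S_k$ via concentration of $\log Z_m^{HD}$ around $m\phi(\beta)$. The heart of the argument is the PP region, which I would handle by the second moment method on $S_k$. A pairs-of-paths expansion of $\mathbf{E}[(Z_m^{HD})^2]$, grouping pairs by the length of their common prefix, yields
\[
\frac{\mathbf{E}[(Z_m^{HD})^2]}{(\mathbf{E}[Z_m^{HD}])^2} \;\asymp\; e^{m(\lambda(2\beta)-2\lambda(\beta)-\log d)},
\]
and for $\beta > \beta_c$ the exponent is positive, which is the relevant regime. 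A Paley--Zygmund inequality on $S_k$ then yields $S_k \geq \tfrac{1}{2}\mathbf{E}[S_k]$ with a fixed positive probability whenever $d_1^k(d-d_1)$ dominates this ratio; at $k = \alpha n$ this reduces to $\alpha \geq \alpha^\ast := (\lambda(2\beta) - 2\lambda(\beta) - \log d)/(\log d_1 + \lambda(2\beta) - 2\lambda(\beta) - \log d)$. The corresponding achievable rate $r(\alpha) := \alpha(\beta u + \log d_1) + (1-\alpha)(\lambda(\beta) + \log d)$ is decreasing on $[\alpha^\ast,1]$ whenever $u < F(\beta)$, hence maximised at $\alpha^\ast$, and a direct computation identifies $r(\alpha^\ast) > \phi(\beta)$ exactly when $u > J(\beta)$; this places the PP phase in $J(\beta) < u < F(\beta)$. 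To promote the fixed positive probability to an a.s.\ lower bound, I iterate the construction across the $d_1^{k_0}$ independent copies of the defect subtree rooted at generation $k_0$, for some small fixed $k_0$, and apply a law-of-large-numbers argument.

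The main obstacle is the strip $F(\beta_c) < u \leq J(\beta)$. There $r(\alpha^\ast) \leq \phi(\beta)$, so the second moment method produces no lower bound on the partial-pinning contribution that exceeds the depinned rate; conversely, the simple annealed upper bound on $S_k$ at intermediate $k$ does not rule out partial pinning at a rate exactly $\phi(\beta)$. Deciding whether this strip belongs to $\mathcal{R}_D$ or $\mathcal{R}_{PP}$ therefore seems to require a finer (beyond-$L^2$) large-deviation analysis of $Z_m^{HD}(\beta)$, and this is why Theorem~\ref{mainthST}(b) can only assert $\mathcal{R}_D \cup \mathcal{R}_{PP}$ in this range.
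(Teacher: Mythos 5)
Your overall architecture matches the paper's: the decomposition over the exit generation $k$ is exactly the paper's Equation (\ref{eq.ZGdecomp}) (your $S_k$ is its $G^{ST}_{k,n}(\beta)$), your two easy bounds are Propositions \ref{prop-LB0} and \ref{prop-UB0}, your pairs-of-paths second-moment computation is Lemma \ref{lem.Theta}, your $\alpha^*$ is the paper's $t^*$, and the curve $J$ arises in precisely the way you describe. Two small remarks on the partially pinned region: the second-moment condition must be strict ($\alpha>\alpha^*$), with the rate $r(\alpha^*)$ only approached as $\alpha\downarrow\alpha^*$ (this is why $u>J(\beta)$ strictly is needed); and you do not need Paley--Zygmund plus an iteration over subtree copies --- since $S_k$ is already a sum of $d_1^k(d-d_1)$ independent summands, Chebychev gives $S_k\geq\frac{1}{2}\mathbf{E}S_k$ with probability $1-e^{-cn}$ once $\alpha>\alpha^*$, and Borel--Cantelli upgrades this directly to an almost sure bound, as in Lemma \ref{lem-LBt1}.

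The genuine gap is the depinned region $\beta>\beta_c$, $u\leq F(\beta_c)$. You propose to control $\sum_k e^{\beta ku}S_k$ ``via concentration of $\log Z^{HD}_m$ around $m\phi(\beta)$,'' but this cannot work as stated: for $k$ of order $n$, $S_k$ is a sum of exponentially many ($d_1^k(d-d_1)$) \emph{independent} copies of $e^{\beta V}Z^{HD}_{n-k-1}(\beta)$, so you must control the maximum over exponentially many copies, while the available concentration bound (\ref{concentration}) is only stretched-exponential, $e^{-c(n-k)^{1/3}}$, and is swamped by the union bound over $d_1^k$ copies. Plain first-moment Markov on $S_k$ only returns the annealed rate $\lambda(\beta)+\log d$, which strictly exceeds $\phi(\beta)$ precisely in the regime $\beta>\beta_c$ you need. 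The paper's Proposition \ref{prop-desorblargebeta} closes this with a fractional-moment (norm-comparison) argument: $G^{ST}_{k,n}(\beta)^{1/\beta}\leq G^{ST}_{k,n}(\beta_c)^{1/\beta_c}$ (Lemma 5 of \cite{BPP}), so Markov's inequality can be applied to $G^{ST}_{k,n}(\beta_c)$, whose mean is computed at the critical temperature where the annealed and quenched rates agree; this is what produces the bound $\phi(\beta)=(\beta/\beta_c)(\lambda(\beta_c)+\log d)$ and the threshold $u\leq\Psi=F(\beta_c)$. Without this idea (or an equivalent linear-scale upper-tail large deviation estimate for $\log Z^{HD}_m$, which the paper neither has nor needs), the assertion that $u\leq F(\beta_c)$ lies in ${\cal R}_{D}$ is unproved. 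A smaller omission: in the strip $F(\beta_c)<u\leq J(\beta)$ you should also record that $\beta u+\log d_1<\phi(\beta)\leq\underline{f}^{ST}(\beta,u)$, which rules out ${\cal R}_{FP}$ and, together with the observation that every point lies in at least one of the three phases, justifies the stated conclusion ${\cal R}_{D}\cup{\cal R}_{PP}$.
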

\noindent
We also prove in Proposition \ref{prop-FJ} that $F(\beta_c)<J(\beta)<F(\beta)$ whenever $\beta>\beta_c$.
This shows that for every $\beta>\beta_c$, there is a value of $u$ such that $(\beta,u)\in {\cal R}_{PP}$.
That is, the partially pinned phase appears as soon as $\beta$ exceeds $\beta_c$.

\begin{figure}[t!]
\centering
\includegraphics[scale=0.9]{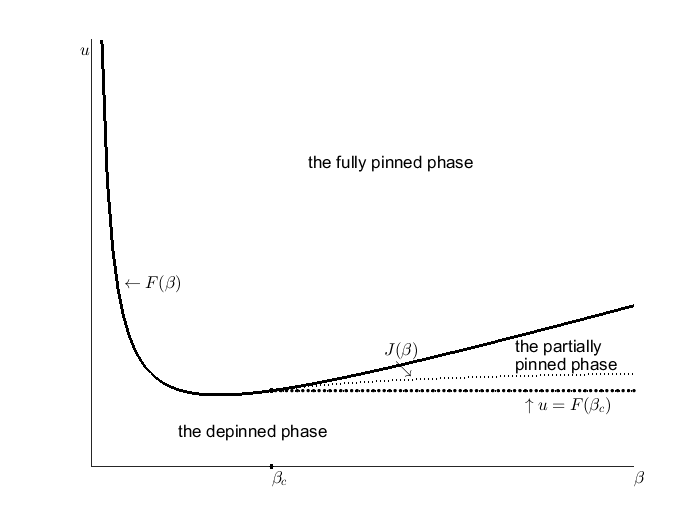}
\caption{Phase diagram for the model with a non-disordered defect subtree, from Theorem~1.4. 
The value $\beta_c$ is the critical inverse temperature for the phase transition between \textit{weak} and
\textit{strong} disorder in the 
\textit{homogeneous disorder} model; 
see section~1.2. 
For the \textit{fully pinned} phase, ${\cal R}_{FP}$, the dominant terms in the partition function are the walks that spend almost all their time in the defect subtree,  whereas for the \textit{depinned} phase, ${\cal R}_{D}$, the walks that spend hardly any time in the defect subtree dominate the partition function. In contrast, the 
dominant walks in the \textit{partially pinned} phase, ${\cal R}_{PP}$, are those for which the fraction
of time spent in the defect subtree is bounded away from 0 and from 1. The boundary curves $F$ and $J$ are given explicitly in Equations~(1.14) and (1.15). Our characterization of the phases is not complete for 
$\beta>\beta_c$ when $u$ is between $F(\beta_c)$ and $J(\beta)$. The point $(\beta_c,F(\beta_c))$ is the leftmost boundary point of the partially pinned phase, by 
Proposition~2.5.
}
\label{phasediagramfig}
\end{figure}

In a different direction of research \cite{BSS}, Basu, Sidoravicious and Sly considered the question of \textit{``how a localized microscopic defect, even if it is small with
respect to certain dynamic parameters, affects the macroscopic behavior of a system''} in the context of two classical exactly solvable models: Poissonized version of Ulam's problem of the maximal increasing
sequence and the totally asymmetric simple exclusion process. In the first model, by using a Poissonized version of directed last passage percolation on $\mathbb{R}^2$, they introduced the microscopic defect by adding a small positive density of extra points along the diagonal line. In the second, they introduced the microscopic defect by slightly decreasing the jump rate of each particle when it crosses the origin. They showed that in Ulam's problem the time constant increases, and for the exclusion process the flux of particles decreases. Thereby, they proved that in both cases the presence of an arbitrarily small microscopic defect affects the macroscopic behavior of the system, and hence settled the longstanding ``Slow Bond Problem'' from statistical physics.

The rest of the paper is organized as follows.   In section~\ref{homodisorder}, we introduce some notation, 
review the directed polymer on disordered tree model, and summarize the main existing results which we use in this paper. In section~\ref{mainresults}, we prove our results:  
Theorem \ref{mainthm-det} is proved in section \ref{non-disorderedtrsec},
Theorem \ref{mainthm} in section \ref{dfbranchsec}, 
and Theorem \ref{mainthST} in section \ref{subtreesecproof}.
 We conclude by discussing our results and some related models in section \ref{finalsec}.

For two random variables $X$ and $Y$, we use the notation 
$X\,\stackrel{d}{=}\,Y$ to denote that they have the same distribution. If a probability statement is true 
with probability one, then we use the phrase ``almost sure,'' abbreviated ``a.s.''.

\subsection{Polymers on trees with homogeneous disorder.}
\label{homodisorder}
In this section, we present some definitions and review the main results related to directed polymers on disordered trees. Let $\T$ be a rooted $d$-ary tree, in which every node  has exactly $d$ offspring ($d\geq 2$). Recall that we label the nodes of $\T$ by two integers $(k,j)$ where $k$ corresponds to the generation and $j\in \{1,2,\cdots,d^k\}$ numbers the nodes within the $k^{th}$ generation. 
The root is  $\roott=(0,1)$.  The set of offspring of node $(k,j)$ is 
$\{(k+1,(j-1)d+\ell) \,:\,1\leq \ell \leq d\}$. See Figure~\ref{tree}. If $x=(k,j)$, then we say that $k$ is the \textit{generation} or \textit{height} of $x$, and we write
$k= \textrm{Height}(x)$.
We assume that every node $x=(k,j)$ of the tree $\T$ has an associated random variable denoted by $V(x)$ or $V_{k,j}$ that represents the random disorder at that node, all independent and with the same distribution as $V$.

Define
\begin{equation}
\label{thatf}
f(\beta):=\lambda(\beta)+\log d-\beta \lambda'(\beta) \quad \hbox{for} \quad \beta\geq 0
\end{equation}
where $\lambda$ comes from Equation~(\ref{moment}).

Note that $\lambda$ is a strictly convex function of $\beta$, and therefore we have 
$f'(\beta)<0$ and $f(\beta)<\log d$ for all $\beta>0.$

For the proof of the following lemma, see \cite{Comets, MO}.
\begin{lemma}
\label{criticalbeta}
$f$ has a unique positive root if and only if either
\begin{itemize}
\item[-] $V$ is unbounded, or
\item[-] $w:=\hbox{ess}\sup V$ is finite and $\mathbf{P}(V=w)<1/d$.
\end{itemize}
We use $\beta_c$ to denote the unique positive root of $f$. If no solution exists, then $\beta_c=\infty$.
\end{lemma}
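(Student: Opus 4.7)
The plan is to reduce the lemma to computing the limit $L:=\lim_{\beta\to\infty} f(\beta)$. Since $V$ is nondegenerate, $\lambda''(\beta)>0$, so differentiating (\ref{thatf}) gives $f'(\beta)=-\beta\lambda''(\beta)<0$ for $\beta>0$. Together with $f(0)=\log d>0$ and continuity, this makes $f$ strictly decreasing on $[0,\infty)$, so $f$ has a (necessarily unique) positive root if and only if $L<0$ (the case $L=0$ still yields $f>0$ everywhere by strict decrease).

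The upper bound on $L$ comes from convexity of $\lambda$: the tangent-line inequality gives $\lambda(\beta)-\beta\lambda'(\beta)\le \lambda(\beta_0)-\beta_0\lambda'(\beta)$ for every fixed $\beta_0>0$. Using the standard fact $\lambda'(\beta)\to w:=\mathrm{ess\,sup}\,V$ (the tilted mean tending to the top of the support, and always bounded by it), the right side tends to $-\infty$ when $V$ is unbounded, giving $L=-\infty$. When $w<\infty$, sending $\beta\to\infty$ and then $\beta_0\to\infty$ and invoking monotone convergence in $\log\mathbf{E}[e^{-\beta_0(w-V)}]\to \log p$, where $p:=\mathbf{P}(V=w)$, yields $L\le \log(dp)$ (with the convention $\log 0 = -\infty$). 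For the matching lower bound when $p>0$, combine the elementary estimates $\mathbf{E}[e^{\beta V}]\ge p e^{\beta w}$ (so $\lambda(\beta)\ge \beta w + \log p$) and $\lambda'(\beta)\le w$: substituting into (\ref{thatf}) gives $f(\beta)\ge \log(dp)$ for all $\beta\ge 0$, hence $L\ge \log(dp)$. So $L=\log(dp)$ in this regime, and $L<0$ is equivalent to $p<1/d$; in the unbounded case and the $w<\infty$, $p=0$ case, $L=-\infty$. Collecting the cases gives the stated dichotomy.

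The one potentially delicate step is pinning $L$ to an exact value in the bounded-support case with $p>0$. A direct analytical approach via $f(\beta)=g(\beta)-\beta g'(\beta)+\log d$ with $g(\beta)=\lambda(\beta)-\beta w$ runs into the indeterminate product $\beta g'(\beta)$; this could be handled by dominated convergence using the uniform bound $\sup_{t>0}te^{-t}=1/e$, but is sidestepped entirely by the two-sided sandwich above. Everything else is routine convexity and calculus.
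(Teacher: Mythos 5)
Your argument is correct. Note that the paper itself does not prove this lemma; it simply cites Comets' lecture notes and M\"orters--Ortgiese, so there is no in-paper proof to compare against. What you have reconstructed is essentially the standard argument from those references: since $f(0)=\log d>0$ and $f'(\beta)=-\beta\lambda''(\beta)<0$ by strict convexity of $\lambda$, the existence of a (necessarily unique) positive root is equivalent to $\lim_{\beta\to\infty}f(\beta)<0$, and the content of the lemma is the identification of that limit as $\log d+\log \mathbf{P}(V=w)$ when $w<\infty$ (and $-\infty$ when $w=\infty$ or the atom vanishes). Your two-sided sandwich for the bounded case is clean: the upper bound via the secant/tangent inequality $\lambda(\beta)-\beta\lambda'(\beta)\le\lambda(\beta_0)-\beta_0\lambda'(\beta)$ followed by $\beta\to\infty$, then $\beta_0\to\infty$ with bounded convergence on $\mathbf{E}[e^{-\beta_0(w-V)}]\to p$, and the lower bound from $\lambda(\beta)\ge\beta w+\log p$ together with $\lambda'\le w$, pin the limit exactly and correctly sidestep the indeterminate product you flag. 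The only standard facts you invoke without proof --- that $\lambda'(\beta)\uparrow w$ (which follows from $\lambda'(\beta)\ge\lambda(\beta)/\beta$ and $\lambda(\beta)/\beta\to w$) and that $\lambda$ is smooth with $\lambda''>0$ under the everywhere-finite moment assumption (1.1) and nondegeneracy --- are indeed routine. The boundary case $p=1/d$ (limit equal to $0$, hence $f>0$ everywhere and no root) is also handled correctly by the strict monotonicity.
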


Recall that $Z^{HD}_n(\beta)$ denotes the \textit{homogeneous disorder} partition function defined in Equation~(\ref{hompartfunction}).

The following positive martingale $(M_n(\beta), \mathcal{F}_n)_{n\geq 0}$ has
played a  crucial role in the analysis of the model:
\begin{equation*}
M_n(\beta):=\frac{Z^{HD}_n(\beta)}{\mathbf{E} Z^{HD}_n(\beta)}
\end{equation*}
where $\mathcal{F}_n=\sigma\{V(x): \textrm{Height}(x)\leq n\}$
is the $\sigma$-algebra generated by all the random variables between generation 1 and $n$. The martingale methods are first used in \cite{Bolt} in the lattice version of the directed polymer model
 and then in \cite{BPP} for the tree version. From the Martingale Convergence Theorem, it follows that $ M(\beta):=\lim_{n\to \infty}M_n(\beta)$ exists almost surely and Kolmogorov's zero-one law implies that $\mathbf{P}(M(\beta)=0)\in \{0,1\}$ because $\{M(\beta)>0\}$ is a tail event. 
 It is known that \cite{Biggins, KahPey}
 \begin{eqnarray*}
M(\beta)>0&& \hbox{ almost surely for all } \quad 0\leq \beta <\beta_c\\ 
M(\beta)=0&& \hbox{ almost surely for all } \quad \beta \geq \beta_c
\end{eqnarray*} 
where $\beta_c$ comes from Lemma~\ref{criticalbeta}.
The first case is called the \textit{weak disorder regime} and the second case is called the \textit{strong disorder regime} \cite{Comets1}. 
Recall that the critical inverse temperature $\beta_c$ also marks a phase transition in the model at the level of the
free energy $\phi$ which is defined in Equation~(\ref{phi}).

The strong disordered regime can be considered as the \textit{energy dominated} or \textit{localized} phase as a single polymer configuration supports the full free energy whereas the weak disorder regime can be considered as the \textit{entropy dominated} or \textit{delocalized} phase as the full free energy is supported by a random sub-tree of positive exponential growth rate, which is strictly smaller than the growth rate of the full tree \cite{MO}. Note also that $M_n(\beta)$ converges to zero exponentially fast for $\beta>\beta_c$, but even though $\beta_c$ is in the strong disorder regime the decay rate of $M_n(\beta_c)$ is not exponential \cite{HuShi}. 

The following concentration result is proven in Proposition 2.5 of \cite{CSY1} for the partition function of the lattice version of the directed polymer model, and it is easy to see that it also holds true for the tree version of model.
\begin{prop}[\cite{CSY1}]
For any $\epsilon>0$ and $\beta\geq 0$, there exists $N:=N(\beta,\epsilon)$ such that
\begin{equation}
\label{concentration}
\fl \mathbf{P}(|\log Z^{HD}_n(\beta) -\mathbf{E}\log Z^{HD}_n(\beta)|\geq n\epsilon)\;\leq\; \exp{\left({-\frac{\epsilon^{2/3}n^{1/3}}{4}}\right)}, \qquad n\geq N \,.
\end{equation}
\end{prop}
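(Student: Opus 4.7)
The plan is to adapt the Doob martingale argument of \cite{CSY1} to the tree setting. The combinatorics differ from the lattice case, but the analytic heart of the proof is the same, and the assumption in (\ref{moment}) that $\lambda(\beta)<\infty$ for all $\beta$ provides exactly the exponential moment control that the method requires.

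First I would enumerate the vertices of $\T$ up to generation $n$ in some fixed order (for instance, generation by generation, left to right within each generation), calling them $x_1,\dots,x_N$ with $N=d+d^2+\cdots+d^n$. With $\mathcal{G}_k:=\sigma(V(x_1),\dots,V(x_k))$ and $\mathcal{G}_0$ trivial, decompose
\[
\log Z_n^{HD}(\beta)-\mathbf{E}\log Z_n^{HD}(\beta)\;=\;\sum_{k=1}^{N}D_k,\qquad D_k:=\mathbf{E}[\log Z_n^{HD}(\beta)\mid \mathcal{G}_k]-\mathbf{E}[\log Z_n^{HD}(\beta)\mid \mathcal{G}_{k-1}].
\]
Next I would bound $|D_k|$ by a standard resampling argument. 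Let $V'(x_k)$ be an independent copy of $V(x_k)$ and let $\widetilde{Z}_n$ denote the partition function obtained by replacing $V(x_k)$ by $V'(x_k)$. Only directed paths from $\roott$ to height $n$ that pass through $x_k$ are affected, so writing $\rho_k$ for the polymer probability (in the original environment) of passing through $x_k$, one obtains $\widetilde{Z}_n/Z_n^{HD}=1+\rho_k(e^{\beta(V'(x_k)-V(x_k))}-1)$, and hence
\[
\bigl|\log Z_n^{HD}(\beta)-\log \widetilde{Z}_n\bigr|\;\leq\;\beta\bigl(|V(x_k)|+|V'(x_k)|\bigr).
\]
Taking conditional expectation gives $|D_k|\le \beta(|V(x_k)|+\mathbf{E}|V|)$.

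The main obstacle, as in \cite{CSY1}, is that $V$ is not assumed bounded, so Azuma--Hoeffding cannot be applied directly. I would handle this by truncation. Fix a level $M>0$ (to be optimized), and on the event $\mathcal{A}_M:=\{\max_{1\le k\le N}|V(x_k)|\le M\}$ the increments satisfy $|D_k|\le 2\beta M$, so Azuma--Hoeffding yields
\[
\mathbf{P}\bigl(\,|\log Z_n^{HD}(\beta)-\mathbf{E}\log Z_n^{HD}(\beta)|\ge n\epsilon,\;\mathcal{A}_M\bigr)\;\le\;2\exp\!\left(-\frac{n^{2}\epsilon^{2}}{8\beta^{2}M^{2}N}\right).
\]
For the complementary event, a union bound together with Markov applied to $e^{\beta|V|}$ gives $\mathbf{P}(\mathcal{A}_M^{c})\le N\cdot C(\beta)e^{-\beta M}$ for a finite constant $C(\beta)$. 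Since $N\le 2 d^n$, the logarithms are linear in $n$, so balancing the two bounds by choosing $M$ of order $\epsilon^{2/3}n^{1/3}$ yields the claimed right-hand side $\exp(-\epsilon^{2/3}n^{1/3}/4)$ once $n$ is large enough (depending on $\beta,\epsilon$). A slightly more careful truncation (applied separately within each generation, to handle the $d^n$ factor in the union bound) together with a routine optimization produces the constants exactly as stated. The hardest conceptual step is the resampling bound on $|D_k|$; once that is in place, the rest is a standard truncate-and-optimize exercise carried over verbatim from \cite{CSY1}.
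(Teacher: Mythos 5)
There is a fatal gap at the Azuma--Hoeffding step. Your filtration reveals the environment one \emph{vertex} at a time, so the Doob martingale has $N=d+d^2+\cdots+d^n\asymp d^n$ increments. With the worst-case bound $|D_k|\le 2\beta M$ on the truncation event, Azuma's exponent is $-n^2\epsilon^2/(8\beta^2M^2N)$, and since $N$ grows like $d^n$ this exponent tends to $0$: the bound degenerates to $2$ for large $n$ no matter how $M$ is chosen. The two halves of your trade-off cannot be balanced: to make the union-bound term $N\,C(\beta)e^{-\beta M}$ small you need $M\gtrsim n$, which makes the Azuma exponent even smaller, while your proposed choice $M\sim\epsilon^{2/3}n^{1/3}$ leaves the union-bound term of size $d^n e^{-\beta\epsilon^{2/3}n^{1/3}}$, which diverges. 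Note also that the per-vertex increment bound $\beta(|V(x_k)|+|V'(x_k)|)$ is essentially attained when the Gibbs weight $\rho_k$ of passing through $x_k$ is close to $1$, so plain Azuma with these worst-case increments cannot be repaired; one would need the conditional variances (Freedman-type inequalities, using $\sum_k\rho_k=n$ per generation), which is a different and more delicate argument than the one you outline.

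The adaptation of \cite{CSY1} that actually works --- and is what the paper has in mind when it asserts the lattice proof carries over --- filters by \emph{generations}, $\mathcal{G}_j:=\sigma(V(x):\mathrm{Height}(x)\le j)$, giving only $n$ increments. The increment at generation $j$ is controlled not via the maximum of the $d^j$ variables in that generation (which grows linearly in $j$ and would also ruin the estimate), but via the identity $Z_n^{HD}(\beta)/Z_n^{(j)}=\langle e^{\beta V(w_j)}\rangle^{(j)}$, where $Z_n^{(j)}$ is the partition function with generation $j$ zeroed out, $\langle\cdot\rangle^{(j)}$ is the corresponding Gibbs average, and $w_j$ is the single vertex of generation $j$ visited by the path. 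Two applications of Jensen's inequality give $\mathbf{E}[(Z_n^{HD}(\beta)/Z_n^{(j)})^{\pm\alpha}\mid\cdots]\le e^{\alpha\lambda(\pm\beta)}$ for $\alpha\in(0,1]$, so the $n$ martingale increments have conditional exponential moments bounded uniformly in $j$, $n$, and in the size of the generation. Feeding this into the general concentration lemma of \cite{CSY1} (exponential Chebyshev with a suitable truncation of the increments, which is where the $\epsilon^{2/3}n^{1/3}$ rate originates) yields the stated bound verbatim; the number of sites per time slice never enters, which is exactly why the tree case is ``easy to see.''
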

By combining Equations (\ref{qfree}) and (\ref{concentration}), we also get
\begin{equation}
\label{meanlimit}
\phi(\beta)=\lim_{n\to \infty}\frac{1}{n}\mathbf{E}\log Z^{HD}_n(\beta).
\end{equation}
Observe that 
\[  \mathbf{E}Z^{HD}_n(\beta)\;=\; d^n(e^{\lambda(\beta)})^n\;=\; e^{n\lambda(\beta)+n\log d}  
\]
 and hence
\begin{equation}
   \label{eq.logElim}
    \lim_{n\to \infty}\frac{1}{n}\log \mathbf{E}Z^{HD}_n(\beta)=\lambda(\beta) +\log d.
\end{equation}
We also note that 
\begin{equation}
   \label{eq.phileqlam}
   \phi(\beta)\,\leq \,\lambda(\beta)+\log d  \hspace{5mm}\hbox{ for every $\beta$}
\end{equation}
(for example, by Equations (\ref{meanlimit}) and (\ref{eq.logElim}) and Jensen's inequality).  Indeed,
Equation (33) of \cite{BPP} tells us that
\begin{equation}
   \label{eq.philtlam}
   \phi(\beta)\,< \,\lambda(\beta)+\log d  \hspace{5mm}\hbox{ for every $\beta>\beta_c$.}
\end{equation}

\section{Proofs of the Main Results}
\label{mainresults}
Before we  prove our results, we introduce some more notation. 
We assume that $1\leq d_1<d$.
Let $\tilde{\T}$ be the ``left-most'' $d_1$-regular subtree of the $d$-regular tree $\T$, with the same 
 root $\roott$, where ``left-most'' is interpreted as follows.
For a node $x\in \tilde{\T}$, let $\tilde{D}(x)$ be the set of nodes in $\tilde{\T}$ whose parent is $x$,
and let $D(x)$ be the set of nodes in $\T\setminus \tilde{\T}$ whose parent is $x$. 
Using the notation $x=(k,j)$, we specify
\begin{eqnarray*}
     \tilde{D}(x)   & := & \{ (k+1,d(j-1)+\ell ) \,: \, 1\leq \ell \leq d_1\}    \\
     D(x)   & := & \{ (k+1,d(j-1)+\ell) \,: \, d_1<\ell \leq d\}  \,.
\end{eqnarray*}
For $d=3$, the cases $d_1=1$ and $d_1=2$ are depicted in  Figures \ref{dbrfigure} and \ref{dstfigure} 
respectively. 
Observe that
\[   |\tilde{D}(x)| \,=\, d_1   \hspace{5mm}\hbox{and} \hspace{5mm} |D(x)|\,=\, d-d_1  
   \hspace{5mm}  \hbox{for $x\in \tilde{\T}$}.
\]

Observe that for every directed path $W=(w(0),w(1),\ldots,w(n))$ with $w(0)=\roott$ and 
$\textrm{Height}(w(n))=n$, there is an integer $k\in [0,n]$ such that $w(m)\in \tilde{\T}$ if and
only if $m\leq k$.  That is, once the path leaves $\tilde{\T}$, it never returns to $\tilde{\T}$.
Many of our calculations involve summing over values of this quantity $k$.

Recall that we assume that there are two possible distributions for $V(x)$, 
called $V$ and $\tilde{V}$:
\begin{verse}
  If $x\in \tilde{\T}$, then $V(x)$ has distribution $\tilde{V}$.   \\
  If $x\in \T\setminus \tilde{\T}$, then $V(x)$ has distribution $V$.
\end{verse}

For a node $x$ with $\textrm{Height}(x)\leq n$, let 
\begin{equation}
\label{partfunction}
   Z_n^{[x]}(\beta)   \;:=\;   \sum_{W:x\rightarrow (n,\cdot)}   e^{\beta V\langle W\rangle} 
\end{equation}
where the sum is over all directed paths $W$ in $\T$ from $x$ to some node in the $n^{th}$ generation,
and the Hamiltonian is 
\[    V\langle W\rangle   \;:=\;  \sum_{y\in W\setminus \{x\}} V(y)  \,.    \]

We shall write
$Z_n^{[x]}(\beta)$ for whichever model is under
consideration, suppressing other details from the notation.

\bigskip

\subsection{The deterministic model:  Proof of Theorem~\ref{mainthm-det}.}
\label{non-disorderedtrsec}

Recall that we have $\mathbf{P}(V=0)\,=\,1$ and $\mathbf{P}(\tilde{V}=u)\,=\,1$ for this model.  
Then the partition function can be written explicitly as 
\[   Z^{Det}_n(\beta,u)\;=\;  \sum_{k=0}^{n-1}e^{k\beta u} d_1^k(d-d_1)  d^{n-k-1}+d^n_1e^{n\beta u }.
\]
The free energy, Equation~(\ref{free-det}), exists because 
\[   \max\{ e^{n\beta u}d_1^n,(d-d_1)d^{n-1} \}  \;\leq \; Z^{Det}_n(\beta,u)
 \; \leq \;  (n+1)\,(\max\{e^{\beta u}d_1,d\})^n
\]
which shows that
\begin{equation}  
\label{Detfree}
f^{Det}(\beta,u)\;=\;
  \max\{ \beta u+\log d_1,\log d\}.
\end{equation}
From Equation~(\ref{Detfree}), it follows that the critical curve defined in Equation~(\ref{critical-det})  is given by
\begin{equation}
\label{Detcritic}
u^{Det}_c(\beta) \;=\;  \frac{\log (d/d_1)}{\beta}\,.
\end{equation}

\bigskip

\subsection{The defect branch:  Proof of Theorem \ref{mainthm}.}
\label{dfbranchsec}
First, we shall introduce some notation. For each nonnegative integer $m$, let $S_m$ be the sum of the
(unshifted) disordered variables along the 
left-most branch of the tree up to the $m^{th}$ generation, that is,
\begin{equation*}
S_m \;:=\;\sum_{k=1}^m V_{k,1} \hspace{5mm}\hbox{ and } \hspace{5mm} S_0=0.
\end{equation*}
Recalling the definition in Equation (\ref{partfunction}), let 
\begin{eqnarray}
   \label{eq.GBR1}
G^{Br}_{k,n}(\beta)\; :=\;\sum _{y\in D((k,1))}e^{\beta V(y)}Z_n^{[y]}(\beta) \hspace{5mm}\hbox{ for }   
  0\leq k<n,
   \hbox{  and}   \\
   \nonumber
  G^{Br}_{n,n}(\beta) \;  :=\; 1. 
\end{eqnarray}
That is, $G^{Br}_{k,n}(\beta)$ is the sum of all contributions by walks from node $x=(k,1)$ up
to height $n$ that do not pass through the node $(k+1,1)$.

Then the partition function can be written as
\begin{equation}
   \label{eq.ZsumG1}
     Z^{Br}_n(\beta,u)  \;=\; \sum_{k=0}^n e^{\beta(uk+S_k)}G^{Br}_{k,n}(\beta).
\end{equation}

Observe that for each $y \in D((k,1))$, the quantities $Z_n^{[y]}(\beta)$ and $Z^{HD}_{n-k-1}(\beta)$ 
have the same distribution.  
Moreover, we see that the sum $G^{Br}_{k,n}(\beta)$ is \textit{stochastically smaller} than 
$Z^{HD}_{n-k}(\beta)$, which by definition means that 
\begin{eqnarray}
   \label{eq.stochP}
     \mathbf{P}(G^{Br}_{k,n}(\beta) \,\geq \,A)  \; \; & \leq \;\;  \mathbf{P}(Z^{HD}_{n-k}(\beta) \,\geq \,A)
     \hspace{4mm}\hbox{for every real constant $A$. }  
\end{eqnarray}

Fix $\beta$ and $u$, and 
let $\epsilon>0$ be given. By Equation (\ref{meanlimit}), there exists a constant 
$n_o=n_o(\epsilon,\beta)\geq 1$ such that 
\begin{equation}
\label{meanbound}
     \mathbf{E}\log Z_j^{HD}(\beta) \; \leq \; j(\phi(\beta)+\epsilon)
   \hspace{6mm}\hbox{for every integer $j\geq n_o$.}
\end{equation}
From Equations (\ref{eq.stochP}), (\ref{meanbound}), and (\ref{concentration}), there exist 
$n_1=n_1(\epsilon,\beta)$ and $c=c(\epsilon)$ such that for all 
nonnegative integers $k$ and $n$ with
$n-k\geq n_1$, we have
\begin{eqnarray}
\nonumber
\fl
\mathbf{P}\left(G_{k,n}^{Br}(\beta)\geq e^{(n-k)[\phi(\beta)+2\epsilon]}\right)\; & \leq \;
\mathbf{P}\left(Z_{n-k}^{HD}(\beta)\geq e^{(n-k)[\phi(\beta)+2\epsilon]}\right)    \\
   \nonumber
   & \leq \;  \mathbf{P}\left(\log Z_{n-k}^{HD}(\beta)\geq \mathbf{E}\log Z_{n-k}^{HD}(\beta)
     +(n-k)\epsilon \right)    \\
      &\leq \; e^{-c(n-k)^{1/3}}\,. 
    \label{conpart}
\end{eqnarray}

Let us define the quantities $W\,:=\,  \max\{\beta(u+\mu),\phi(\beta)\}$ and
\begin{equation}
   p_n\; :=\; \mathbf{P}\left( Z^{Br}_n(\beta,u)
    \;\geq \;  (n+1) e^{n(W+3\epsilon)}\right)   \hspace{5mm}\hbox{for $n\geq 1$.}
\end{equation}

By Equation (\ref{eq.ZsumG1}), for every $n$ we obtain 
\begin{eqnarray*}
p_n  &\leq & 
   \sum_{k=0}^{n}\mathbf{P}\left(e^{\beta(ku+S_k)}G_{k,n}^{Br}(\beta)\,\geq \,
    e^{k(W+\epsilon )+n\epsilon}e^{(n-k)(W+\epsilon)+n\epsilon}\right)\\
&\leq & \sum_{k=0}^{n}\mathbf{P}\left(e^{\beta(ku+S_k)}G_{k,n}^{Br}(\beta)\,\geq \,
    e^{k(\beta(u+\mu)+\epsilon )+n\epsilon}e^{(n-k)(\phi(\beta)+\epsilon)+n\epsilon}\right)\\
  & \leq & A_n+B_n \,,
\end{eqnarray*}
where
\begin{eqnarray*}  
   A_n \;&:=\;  \sum_{k=0}^{n}\mathbf{P}\left(e^{\beta(ku+S_k)}\,\geq \,
    e^{k(\beta(u+\mu)+\epsilon )+n\epsilon}\right)   
    \hspace{5mm}\hbox{and}  \\
    B_n\;&:=\; \sum_{k=0}^{n}\mathbf{P}\left(G_{k,n}^{Br}(\beta)\,\geq \,
    e^{(n-k)(\phi(\beta)+\epsilon)+n\epsilon}\right)\,.
\end{eqnarray*}

We shall handle $A_n$ by a standard ``large deviation'' bound.
Recall that $\lambda(\beta)=\log \mathbf{E}[e^{\beta V}]$ and $\mu=\mathbf{E}(V)$.
For  every $t>0$ and  every $\alpha>0$, we have
\begin{equation*}
   \mathbf{P}(S_m\geq m(\mu+t)) \; \leq\; e^{-m[\alpha(\mu+t)-\lambda(\alpha)]}
  \hspace{5mm}\hbox{for every $m\geq 1$}
\end{equation*}
(see for example Equation (2.6.2) of \cite{Durrett}).
Since $\lambda'(0)=\mu$, there exists $\alpha^*>0$ such that $\alpha^*(\mu+\epsilon)-\lambda(\alpha^*)>0$
(this is Lemma 2.6.2 of \cite{Durrett}).   Therefore, for every $k\in [1,n]$, we have
\begin{eqnarray*}
    \mathbf{P}\left(S_k \,\geq \, k \left(\mu+\epsilon+\frac{n\epsilon}{k\beta}\right)\right)
    \;\leq \; e^{-k[\alpha^*(\mu+\epsilon+\frac{n\epsilon}{k\beta})-\lambda(\alpha^*)]}
    \;<\; e^{-n(\alpha^*\epsilon/\beta)}.
\end{eqnarray*}
Thus, observing that the $k=0$ summand of $A_n$ equals 0, we have
\begin{equation}
   \label{eq.sumAf}
     A_n   \;=\;   \sum_{k=1}^{n}\mathbf{P}
      \left(S_k\geq k\left(\mu+\epsilon+\frac{n\epsilon}{k\beta}\right) \right)
         \;\leq \;  n \,e^{-n(\alpha^*\epsilon/\beta)}
         \hspace{3.4mm}\hbox{for all $n\geq 1$. }
\end{equation}
For $B_n$, let $m_n=\lfloor n-\sqrt n\rfloor$.  
Using Equations (\ref{eq.stochP}) and (\ref{conpart}) and Markov's inequality, 
we deduce that if $\sqrt{n}>n_1(\epsilon,\beta)$, then  
\begin{eqnarray}
   \nonumber
  B_n &\leq &
  \sum_{k=0}^{m_n}\mathbf{P}\left(G_{k,n}^{Br}(\beta)\geq e^{(n-k)(\phi(\beta)+\epsilon)+
    (n-k)\epsilon}\right)\\
    \nonumber
&& \quad + \;\sum_{k=m_n+1}^{n}\mathbf{P}\left(Z_{n-k}^{HD}(\beta)\geq e^{(n-k)(\phi(\beta)+\epsilon)
   +n\epsilon}\right)\\
   \nonumber
&\leq &\sum_{k=1}^{m_n} e^{-c(n-k)^{1/3}} + \sum_{k=m_n+1}^{n} 
  \frac{\mathbf{E} Z_{n-k}^{HD}(\beta) }{e^{(n-k)[\phi(\beta)+\epsilon]+n\epsilon}}\\
  \nonumber
&\leq & n e^{-c n^{1/6}}+\sum_{k=m_n+1}^{n} \frac{e^{(n-k)(\lambda(\beta)+\log d)}}{
     e^{(n-k)\phi(\beta)} \,e^{n\epsilon} }\\
     \label{eq.sumBf}
&\leq & n e^{-c n^{1/6}} +(\sqrt{n}+1)\, e^{-n\epsilon} e^{\sqrt n (\lambda(\beta)+\log d-\phi(\beta))}
\end{eqnarray}
(where the last inequality holds because $\phi(\beta)\leq \lambda(\beta)+\log d$).

Recalling that $p_n\,\leq \, A_n+B_n$, we see from Equations (\ref{eq.sumAf}) and (\ref{eq.sumBf})
that $\sum_{n=1}^{\infty}p_n$ converges.  Therefore, the Borel-Cantelli Lemma tells us that
\[   
  \fl \mathbf{P}\left(  \frac{1}{n} \log Z_n^{Br}(\beta,u) \,\geq \,\frac{1}{n}\log(n+1)\,+
   W+3\epsilon   \quad \hbox{for infinitely many values of $n$}\right) \,=\, 0 \,.
 \]
 Therefore $\limsup_{n\rightarrow\infty} \frac{1}{n} \log Z_n^{Br}(\beta,u)
 \leq W+3\epsilon$ almost surely.  Since $\epsilon$ can be made arbitrarily close to 0, we obtain
 \[
     \limsup_{n\rightarrow\infty} \frac{1}{n} \log Z_n^{Br}(\beta,u) \; \leq\; W    \hspace{5mm} a.s. 
 \]
 
Note also that 
\[e^{\beta u n +S_n}+e^{\beta V((1,2))} Z_n^{[(1,2)]}(\beta) \;\leq\; Z^{Br}_n(\beta,u) \,.
\]
By the Strong Law of Large Numbers (applied to $S_n$), and Equation (\ref{phi}) (recalling
that $Z_n^{[(1,2)]}(\beta)$ and $Z_{n-1}^{HD}(\beta)$ have the same distribution),
we conclude that 
\[  W=\max\{\beta(u+\mu),\phi(\beta)\} \;\leq\;  \liminf_{n\to \infty }\frac{1}{n}\log Z^{Br}_n(\beta,u)
  \hspace{5mm} a.s.
\]
This completes the proof of Theorem~\ref{mainthm}.
\qed

\begin{exmp}
If the disorder distribution is normal with mean $\mu$ and variance $\sigma^2$, then $\beta_c=\sqrt{2 \log d}/\sigma$ and
\begin{equation}
u^{Br}_c(\beta)=\left\{\begin{array}{ll}
\frac{1}{2}\sigma^2\beta+\frac{\log d}{\beta}  &\hbox{ if }\quad  \beta < \beta_c\\
\sigma \sqrt{2\log d}  &\hbox{ if } \quad \beta \geq \beta_c.
\end{array}\right.
\end{equation}
\end{exmp}
\begin{exmp} Let $V$ be a general disorder distribution with mean zero and variance $\sigma^2$. Then $\lambda(\beta)\sim\frac{1}{2}\sigma^2\beta^2$ as $\beta\to 0.$ Therefore, $u^{Br}_c(\beta)\sim\frac{1}{2}\sigma^2\beta+\frac{\log d}{\beta}$ as $\beta\to 0.$
 \end{exmp}

\begin{rem}
More generally, our proofs can be easily modified to show that for the case $d_1=1$ 
and $V(x)\,\stackrel{d}{=}\,\tilde{V}$ for all $x\in\tilde{\T}$, we have 
\[     \lim_{n\rightarrow\infty} \frac{1}{n}\log Z_n^{[0]}(\beta)   \;=\;
       \max\{\beta \mathbf{E}(\tilde{V}),\phi(\beta)\}  \hspace{5mm}a.s. 
\]
This reduces to Theorem \ref{mainthm} in Case I, where $\mathbf{E}(\tilde{V})\,=\,u+\mathbf{E}(V)$.
\end{rem}
\bigskip

\subsection{The defect subtree:  Proof of Theorem~\ref{mainthST} and some auxiliary results.}
\label{subtreesecproof}

For the model with a non-disordered defect subtree, it is not obvious how to prove that the 
limiting free energy exists almost surely.

Therefore, we make the following definitions:
\begin{eqnarray*}
    \overline{f}^{ST}(\beta,u)  & := &   \limsup_{n\rightarrow \infty}  \frac{1}{n}\,\log Z^{ST}_n(\beta,u) \\
    \underline{f}^{ST}(\beta,u)  & := &   \liminf_{n\rightarrow \infty}  \frac{1}{n}\,\log Z^{ST}_n(\beta,u)\,.
\end{eqnarray*}   
By the Kolmogorov zero-one law, 
$\overline{f}^{ST}(\beta,u)$ and $\underline{f}^{ST}(\beta,u)$ are constant almost surely,
so we shall treat $\overline{f}^{ST}$ and $\underline{f}^{ST}$ as deterministic functions.
If $\overline{f}^{ST}(\beta,u)\,=\,\underline{f}^{ST}(\beta,u)$, then we define $f^{ST}(\beta,u)$
to be the common value; in other words, 
\begin{equation}
   \label{eq.fSTdef}
      f^{ST}(\beta,u) \;:=\;   \lim_{n\rightarrow \infty}  \frac{1}{n}\,\log Z^{ST}_n(\beta,u)
     \hspace{5mm}\hbox{if this limit exists.}
 \end{equation}
 
We can now formalize the definition of the three phases that we introduced in Section 
\ref{bulkvsmd}.
\begin{defn}
\label{phase-defn}
We define the three phases as follows:
\begin{eqnarray*}
   {\cal R}_{FP} & := & \{(\beta,u): f^{ST}(\beta,u)=\beta u+\log d_1\}   \hspace{5mm}
         \hbox{(\textit{fully pinned})}    \\
    {\cal R}_{D}  & := & \{(\beta,u): f^{ST}(\beta,u)=\phi(\beta)\}   \hspace{5mm}
         \hbox{(\textit{depinned})}    \\
     {\cal R}_{PP}  & := & \{(\beta,u): \overline{f}^{ST}(\beta,u)>\max\{\phi(\beta),\beta u+\log d_1\} \,\}
        \hspace{5mm}  \hbox{(\textit{partially pinned}).}    
 \end{eqnarray*}
 \end{defn}
 
 \noindent
 Implicit in the definitions is that the limiting free energy of Equation (\ref{eq.fSTdef}) must exist 
 for every point of ${\cal R}_{FP}$ and $\cal{R}_{D}$.
 
Let's recall the definitions of the functions $F$ and $J$ defined in section~\ref{subtreesec}:  
 \begin{eqnarray}
      %\nonumber
      \label{functionF}
      F(\beta)   & = & \frac{1}{\beta}\,(\lambda(\beta)+\log d -\log d_1)   \\
      %\nonumber
      \label{functionJ}
      J(\beta) & = &   \frac{1}{\beta}\left( \phi(\beta)-\log d_1-
      \frac{[\lambda(\beta)+\log d-\phi(\beta)]\log d_1}{\lambda(2\beta)-2\lambda(\beta)-\log d}
         \right) .
 \end{eqnarray}

Our characterization of the phases in Theorem~\ref{mainthST} is not complete for $\beta>\beta_c$ when
$u$ is between $F(\beta_c)$ and $J(\beta)$.
However, the following proposition, combined with Theorem~\ref{mainthST},
shows that the partially pinned phase is nonempty, and indeed it
contains points $(\beta,u)$ with $\beta$ arbitrarily close to $\beta_c$.

\begin{prop} 
\label{prop-FJ}
For every $\beta>\beta_c$,
\[F(\beta_c) \,=\,J(\beta_c)\,<\, J(\beta) \,<\, \frac{\phi(\beta)-\log d_1}{\beta} \,<\,F(\beta).\]
\end{prop}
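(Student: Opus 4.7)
The plan is to verify the equality and each of the three strict inequalities in turn. Setting $A(\beta):=\lambda(\beta)+\log d-\phi(\beta)$ and $h(\beta):=\lambda(2\beta)-2\lambda(\beta)-\log d$, I can write $\beta J(\beta)=\phi(\beta)-\log d_1 - A(\beta)\log d_1/h(\beta)$. The equality $F(\beta_c)=J(\beta_c)$ follows by direct substitution: the explicit formula \eqref{qfree} gives $\phi(\beta_c)=\lambda(\beta_c)+\log d$, so $A(\beta_c)=0$ and both quantities collapse to $(\lambda(\beta_c)+\log d-\log d_1)/\beta_c$. The rightmost inequality $(\phi(\beta)-\log d_1)/\beta<F(\beta)$ for $\beta>\beta_c$ is then immediate from the strict inequality $\phi(\beta)<\lambda(\beta)+\log d$ of \eqref{eq.philtlam}.

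For the middle inequality $J(\beta)<(\phi(\beta)-\log d_1)/\beta$, I need the correction term $A(\beta)\log d_1/(\beta h(\beta))$ to be strictly positive. Since $d_1\geq 2$ (so $\log d_1>0$) and $A(\beta)>0$ by \eqref{eq.philtlam}, it suffices to show that $h(\beta)>0$ on $[\beta_c,\infty)$. Substituting $\log d=\beta_c\lambda'(\beta_c)-\lambda(\beta_c)$ from Lemma~\ref{criticalbeta} gives
\[ h(\beta_c)\;=\;\lambda(2\beta_c)-\lambda(\beta_c)-\beta_c\lambda'(\beta_c), \]
which is strictly positive by strict convexity of $\lambda$ (the tangent line at $\beta_c$ lies strictly below $\lambda(2\beta_c)$). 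Then $h'(\beta)=2[\lambda'(2\beta)-\lambda'(\beta)]>0$ for $\beta>0$ by strict convexity, so $h$ is strictly increasing and hence $h(\beta)\geq h(\beta_c)>0$ throughout $[\beta_c,\infty)$.

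The main obstacle is the leftmost inequality $J(\beta_c)<J(\beta)$ for $\beta>\beta_c$. Using $\phi(\beta)=\beta v_c$ for $\beta\geq\beta_c$ with $v_c:=(\lambda(\beta_c)+\log d)/\beta_c$, a short calculation yields
\[ J(\beta)-J(\beta_c)\;=\;\frac{\log d_1}{\beta}\left(\frac{\beta-\beta_c}{\beta_c}-\frac{A(\beta)}{h(\beta)}\right), \]
reducing the task to proving $(\beta-\beta_c)h(\beta)>\beta_c A(\beta)$. Expanding both sides and using $\beta_c\phi(\beta)=\beta(\lambda(\beta_c)+\log d)$, the $\log d$ terms cancel cleanly and the inequality simplifies to
\[ (\beta-\beta_c)\lambda(2\beta)-(2\beta-\beta_c)\lambda(\beta)+\beta\lambda(\beta_c)\;>\;0. \]
The key trick, which is the real content of the proof, is to recognize this as a strict convexity statement for $\lambda$: with $\alpha:=(\beta-\beta_c)/(2\beta-\beta_c)\in(0,1)$, one has $\beta=\alpha(2\beta)+(1-\alpha)\beta_c$ and $1-\alpha=\beta/(2\beta-\beta_c)$, so strict convexity gives $\lambda(\beta)<\alpha\lambda(2\beta)+(1-\alpha)\lambda(\beta_c)$, and multiplying by $(2\beta-\beta_c)>0$ yields exactly the desired inequality. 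Identifying this particular convex combination is the crux of the argument; the rest is algebraic bookkeeping.
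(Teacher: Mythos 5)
Your proof is correct and follows essentially the same route as the paper: the same algebraic reduction of $J(\beta)-J(\beta_c)$ to the inequality $(\beta-\beta_c)\lambda(2\beta)-(2\beta-\beta_c)\lambda(\beta)+\beta\lambda(\beta_c)>0$, which you settle by writing $\beta$ as a convex combination of $\beta_c$ and $2\beta$ — exactly equivalent to the paper's comparison of secant slopes of $\lambda$ over $[\beta_c,\beta]$ and $[\beta,2\beta]$. The only (minor) deviation is that you establish $\lambda(2\beta)-2\lambda(\beta)-\log d>0$ directly via $h(\beta_c)>0$ and monotonicity of $h$, where the paper instead invokes its Lemma~\ref{lem.Theta}(b); both rest on the same convexity of $\lambda$ and the characterization of $\beta_c$, and your explicit observation that $d_1\geq 2$ is needed for the strict inequalities matches the paper's implicit assumption.
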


The proof of  Proposition~\ref{prop-FJ} appears at the end of this section,
immediately before the proof of Theorem \ref{mainthST}. We shall first prove some preliminary results.
 
\begin{prop}
   \label{prop-LB0}
For every $\beta>0$ and every $u$,
\[    \liminf_{n\rightarrow \infty}  \frac{1}{n}\,\log Z^{ST}_n(\beta,u)
   \;\geq \;  \max\{\phi(\beta),\   \beta u+\log d_1\}    \quad a.s.
 \]
\end{prop}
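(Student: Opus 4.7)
The plan is to exhibit two separate families of walks, one entirely inside $\tilde{\T}$ and one entirely outside $\tilde{\T}$ after the first step, and observe that each contribution to $Z^{ST}_n(\beta,u)$ already achieves one of the two lower bounds. Because $Z^{ST}_n(\beta,u)$ is a sum of nonnegative terms, any such sub-sum gives a valid lower bound on the partition function.

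For the first bound $\beta u + \log d_1$, I would restrict the sum in $Z^{ST}_n(\beta,u)$ to the $d_1^n$ directed paths from $\roott$ that remain in $\tilde{\T}$ for all $n$ generations. Each such path visits only vertices where the potential is the deterministic constant $u$, so its Boltzmann weight is exactly $e^{\beta u n}$. This gives
\[ Z^{ST}_n(\beta,u) \;\geq\; d_1^n\, e^{\beta u n} \;=\; e^{n(\beta u + \log d_1)}, \]
and this is a purely deterministic inequality, holding for every $n$ and every realization of the disorder.

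For the second bound $\phi(\beta)$, I would fix a single child $y_0$ of the root lying in $D(\roott)$, which is nonempty because $d_1 < d$. Every descendant of $y_0$ lies in $\T\setminus\tilde{\T}$, so the disorder in the subtree rooted at $y_0$ is i.i.d.\ with distribution $V$. Consequently $Z_n^{[y_0]}(\beta)$ has the same distribution as $Z_{n-1}^{HD}(\beta)$, and in fact it \emph{is} the partition function of a homogeneous-disorder directed polymer model in the subtree rooted at $y_0$. Restricting $Z^{ST}_n(\beta,u)$ to walks that take their first step to $y_0$ gives
\[ Z^{ST}_n(\beta,u) \;\geq\; e^{\beta V(y_0)}\, Z_n^{[y_0]}(\beta). \]
Taking $\frac{1}{n}\log$ and letting $n\to\infty$, the term $\frac{1}{n}\beta V(y_0)$ vanishes a.s.\ (since $V(y_0)$ is a single finite random variable), while $\frac{1}{n}\log Z_n^{[y_0]}(\beta) \to \phi(\beta)$ a.s.\ by the a.s.\ existence of the HD free energy in Equation~(\ref{phi}) applied to the subtree at $y_0$. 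Therefore $\liminf_n \frac{1}{n}\log Z^{ST}_n(\beta,u) \geq \phi(\beta)$ almost surely.

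Combining the two bounds gives $\liminf_n \frac{1}{n}\log Z^{ST}_n(\beta,u) \geq \max\{\phi(\beta),\,\beta u + \log d_1\}$ almost surely. There is no real obstacle here: the argument is essentially a trivial ``truncate the sum'' lower bound, and the only point requiring a little care is the a.s.\ (rather than in-distribution) invocation of $\phi(\beta)$ as the limiting free energy on the subtree rooted at $y_0$, which is immediate because that subtree is a pure HD model.
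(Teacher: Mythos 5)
Your proposal is correct and follows essentially the same route as the paper: the paper likewise bounds $Z^{ST}_n(\beta,u)$ below by the $d_1^n e^{\beta n u}$ contribution of walks staying in $\tilde{\T}$, and by $e^{\beta V(y)}Z_n^{[y]}(\beta)$ for a child $y\in D(\roott)$, whose subtree is a homogeneous-disorder model with free energy $\phi(\beta)$. Your remark that the a.s.\ convergence should be invoked for the HD model living on the subtree at $y_0$ (rather than merely via equality in distribution) is a point the paper glosses over slightly, but the substance is identical.
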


Before we prove Proposition~\ref{prop-LB0}, observe that in any path $W$ from the root $\roott$ to generation $n$, there is a node $x$ in $W$ such that 
the part of $W$ from $\roott$ to $x$ is contained in $\tilde{\T}$, and the rest of $W$ is 
outside $\tilde{\T}$.
Writing $k$ to represent the generation of $x$, we see that
\begin{equation}
    \label{eq.Zn0}
  Z^{ST}_n(\beta,u)  \;=\;   \sum_{k=0}^{n-1} \sum_{x\in \tilde{T}: \,\textrm{Height}(x)=k}
       \sum_{y\in D(x)} e^{\beta k u} e^{\beta V(y)} Z_n^{[y]}(\beta)   \;+\;  d_1^ne^{\beta nu}.
\end{equation}
(The rightmost term corresponds to those paths that never leave $\tilde{\T}$, i.e.\ $k=n$.)
\begin{proof}[Proof of Proposition~\ref{prop-LB0}]
We shall use Equation (\ref{eq.Zn0}).  Specifically, for $y\in D(\roott)$, we have
\[    Z^{ST}_n(\beta,u)  \;\geq \;  e^{\beta V(y)} Z_n^{[y]}(\beta) 
     \hspace{5mm}\hbox{and}\hspace{5mm}    
  Z_n^{[y]}(\beta) \;\eqd\;  Z_{n-1}^{HD}(\beta) \,.
\]
Then by Equation~(\ref{phi}), we have
\begin{equation}  
  \label{eq.LB01}
      \liminf_{n\rightarrow \infty}  \frac{1}{n}\,\log Z^{ST}_n(\beta,u)
   \;\geq \;  \phi(\beta)     \hspace{5mm}  \hbox{a.s.}
 \end{equation}
Also, since $Z^{ST}_n(\beta,u)  \,\geq \;d_1^ne^{\beta nu}$, we have 
\begin{equation}  
  \label{eq.LB02}
    \liminf_{n\rightarrow \infty}  \frac{1}{n}\,\log Z^{ST}_n(\beta,u)  
   \;\geq \;  \log d_1  \,+\,  \beta u \hspace{5mm}  \hbox{a.s.}
\end{equation}
The proposition follows from inequalities (\ref{eq.LB01}) and (\ref{eq.LB02}).
\end{proof}
\bigskip
We introduce the following notation:  for $0\leq k< n$, 
\begin{equation}
   \label{eq.Gdef}
    G^{ST}_{k,n}(\beta)  \;:=\;  \sum_{x\in \tilde{T}: \,\textrm{Height}(x)=k }
       \sum_{y\in D(x)} e^{\beta V(y)} Z_n^{[y]}(\beta)  \,.
\end{equation}

(Observe that in the case $d_1=1$, the above would reduce to $G_{k,n}^{Br}(\beta)$ as
defined in Equation (\ref{eq.GBR1}).)
Then we see from Equation (\ref{eq.Zn0}) that
\begin{equation}
    \label{eq.ZGdecomp}
 Z^{ST}_n(\beta,u)   \;=\;   \sum_{k=0}^{n-1} e^{\beta ku} G^{ST}_{k,n}(\beta)   \;+\;  d_1^ne^{\beta nu},
\end{equation}
which is the analogue of Equation (\ref{eq.ZsumG1}).
Since $G^{ST}_{k,n}(\beta)$ is a sum of $d_1^{k}(d-d_1)$ independent copies of 
$e^{\beta V^*}Z_{n-k-1}^{HD}(\beta)$ (where $V^*$ is a copy of $V$, independent of everything else), 
we have
\begin{equation}
   \label{eq.Gmean}
      \mathbf{E}(G^{ST}_{k,n}(\beta))   \;=\;  d_1^{k}(d-d_1)d^{n-k-1} e^{(n-k)\lambda(\beta)}
      \hspace{5mm}\hbox{and}
 \end{equation}
 \begin{equation}
    \label{eq.Gvar}
      \textrm{Var}(G^{ST}_{k,n}(\beta))   \;=\;  d_1^{k}(d-d_1)
       \textrm{Var}(e^{\beta V^*} Z^{HD}_{n-k-1}(\beta)) \,.
 \end{equation}

\begin{prop}
    \label{prop-UB0}
For every $\beta>0$ and every $u$, 
\[    \limsup_{n\rightarrow \infty}  \frac{1}{n}\,\log Z^{ST}_n(\beta,u)  
   \;\leq \;  \max\{\lambda(\beta)+\log d,\   \beta u+\log d_1\}  \hspace{5mm}  \hbox{a.s.} 
\]
\end{prop}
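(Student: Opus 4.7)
The plan is to apply a first-moment bound combined with Markov's inequality and Borel-Cantelli. The decomposition in Equation~(\ref{eq.ZGdecomp}) together with the mean computation in Equation~(\ref{eq.Gmean}) gives us all we need to estimate $\mathbf{E}\,Z^{ST}_n(\beta,u)$ explicitly.

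First I would compute the expectation directly:
\[
\mathbf{E}\,Z^{ST}_n(\beta,u) \;=\;\sum_{k=0}^{n-1} e^{\beta k u}\,d_1^k(d-d_1)d^{n-k-1}e^{(n-k)\lambda(\beta)}\;+\;d_1^n e^{\beta n u}.
\]
Setting $M:=\max\bigl\{d\,e^{\lambda(\beta)},\;d_1 e^{\beta u}\bigr\}$, each summand is of the form $(d_1 e^{\beta u})^k(de^{\lambda(\beta)})^{n-k}$ times a constant at most $1$, so each of the $n+1$ terms is bounded by $M^n$. Thus $\mathbf{E}\,Z^{ST}_n(\beta,u)\leq (n+1)M^n$.

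Next, for any fixed $\epsilon>0$, Markov's inequality yields
\[
\mathbf{P}\bigl(\,Z^{ST}_n(\beta,u)\,\geq\,(n+1)M^n e^{n\epsilon}\bigr)\;\leq\;e^{-n\epsilon}.
\]
Since $\sum_n e^{-n\epsilon}<\infty$, the Borel-Cantelli lemma implies that almost surely, only finitely many of these events occur; hence
\[
\limsup_{n\to\infty}\frac{1}{n}\log Z^{ST}_n(\beta,u)\;\leq\;\log M+\epsilon\quad\text{a.s.}
\]
Letting $\epsilon\downarrow 0$ through a countable sequence gives the claimed bound, since $\log M=\max\{\lambda(\beta)+\log d,\;\beta u+\log d_1\}$.

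There is no serious obstacle in this argument; the only thing to notice is that the first-moment (annealed) bound is automatically an almost sure upper bound thanks to the summability $\sum_n e^{-n\epsilon}<\infty$. Of course, this bound will generally be loose in the strong disorder regime $\beta>\beta_c$, where the true free energy involves $\phi(\beta)<\lambda(\beta)+\log d$ rather than $\lambda(\beta)+\log d$. The sharper upper bound needed to pin down the depinned and partially pinned phases will require more delicate second-moment or concentration arguments (presumably along the lines of Equation~(\ref{concentration}) applied to the relevant subsums $G^{ST}_{k,n}$), but the crude bound above suffices to deliver the statement of Proposition~\ref{prop-UB0}.
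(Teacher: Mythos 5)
Your proposal is correct and follows essentially the same route as the paper: a first-moment (annealed) bound built from the decomposition (\ref{eq.ZGdecomp}) and the mean formula (\ref{eq.Gmean}), followed by Markov's inequality and Borel--Cantelli. The only cosmetic difference is that you apply Markov once to the full sum after bounding $\mathbf{E}\,Z^{ST}_n(\beta,u)\leq (n+1)M^n$, whereas the paper applies Markov term by term in $k$ and then union-bounds; the two are equivalent here.
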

\begin{proof}[Proof of Proposition~\ref{prop-UB0}]
For given $(\beta,u)$, let $M \,=\,\max\{\lambda(\beta)+\log d,\   \beta u+\log d_1\}$.   Fix $C>M$.
By Equation (\ref{eq.ZGdecomp}), we have
\begin{eqnarray}
   & \mathbf{P} \left(  \frac{1}{n} \log Z^{ST}_n(\beta,u) \,>\,C\right)      \;=\;   
         \mathbf{P}\left( Z^{ST}_n(\beta,u)\,>\,e^{Cn}\right)    \\
   & \hspace{10mm} \leq \; 
     \sum_{k=0}^{n-1} \mathbf{P} \left(e^{\beta ku} G^{ST}_{k,n}(\beta)\,>\,  \frac{e^{Cn}}{n+1}\right)
         \,+\,    {\bf 1}\left(d_1^n e^{\beta nu}\,>\,\frac{e^{Cn}}{n+1}\right) \,.
      \label{eq.ZsumP}
\end{eqnarray}
The rightmost term in Equation (\ref{eq.ZsumP}) is zero for all sufficiently large $n$, since
$C> \beta u +\log d_1$.  For $0\leq k\leq n-1$, we have
\begin{eqnarray*}
  \mathbf{P} &\left(e^{\beta ku} G^{ST}_{k,n}(\beta)\,>\,  \frac{e^{Cn}}{n+1}\right)   
    \\
    &  \leq \;  \frac{e^{\beta ku}\,\mathbf{E}(G^{ST}_{k,n}(\beta))}{e^{Cn}/(n+1)}
          \hspace{18mm}\hbox{(by Markov's Inequality)}     \\
     & < \; \frac{e^{\beta ku}\,d_1^{k}  \,d^{n-k}\,e^{(n-k)\lambda(\beta)}}{e^{Cn}/(n+1)}  
        \hspace{15mm}\hbox{(by Equation \ (\ref{eq.Gmean}))}  \\     
    & = \; (n+1) %(d-d_1)\, e^{\lambda(\beta)} 
           \,\frac{(e^{\lambda(\beta)+\log d})^{n-k}(e^{\beta u+\log d_1})^k}{e^{Cn} }   \\
     & \leq \; (n+1)%(d-d_1) \,e^{\lambda(\beta)} 
       \,e^{n(M-C)}  \,.
 \end{eqnarray*}     
Hence, for large $n$, 
\[    \mathbf{P} \left(  \frac{1}{n} \log Z^{ST}_n(\beta,u)  \,>\,C\right)   \;\leq \;   
     n(n+1)  %(d-d_1)\, e^{\lambda(\beta)}
   \,e^{n(M-C)}  \,.
\]
Since $M-C<0$, the Borel-Cantelli Lemma shows that, with probability 1, there are only finitely
many values of $n$ for which $\frac{1}{n} \log Z^{ST}_n(\beta,u)  \,>\,C$.  This proves that
\[\limsup_{n\rightarrow \infty}  \frac{1}{n}\,\log Z^{ST}_n(\beta,u) \,\leq \,C \hbox{ a.s. }\]
Since $C$ can be 
arbitrarily close to $M$, Proposition~\ref{prop-UB0} follows.

\end{proof}
\bigskip

Since $\phi(\beta)=\lambda(\beta)+\log d$ for $\beta\leq \beta_c$, 
Propositions \ref{prop-LB0} and \ref{prop-UB0} immediately imply the following.

\medskip
\begin{cor}
   \label{subtreecor1}
For every $\beta\leq \beta_c$, 
\[    \lim_{n\rightarrow \infty}  \frac{1}{n}\,\log Z^{ST}_n(\beta,u)  
   \; = \;  \max\{\phi(\beta),\   \beta u+\log d_1\} \,.
 \]
 In particular, the limit exists almost surely.
\end{cor}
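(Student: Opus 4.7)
The proof is essentially immediate from the two propositions already established, together with the explicit formula for $\phi(\beta)$ in the weak/critical disorder regime. My plan is to simply combine Proposition~\ref{prop-LB0} and Proposition~\ref{prop-UB0}, observing that their bounds coincide on the range $\beta \leq \beta_c$.

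Concretely, first I would recall from Equation~(\ref{qfree}) (the explicit formula of \cite{BPP}) that for $\beta \leq \beta_c$,
\[
   \phi(\beta) \;=\; \lambda(\beta) + \log d.
\]
Then Proposition~\ref{prop-UB0} gives
\[
   \limsup_{n\to\infty} \frac{1}{n} \log Z^{ST}_n(\beta,u) \;\leq\; \max\{\lambda(\beta) + \log d, \ \beta u + \log d_1\} \;=\; \max\{\phi(\beta), \ \beta u + \log d_1\} \quad \mathrm{a.s.},
\]
while Proposition~\ref{prop-LB0} gives the reverse inequality
\[
   \liminf_{n\to\infty} \frac{1}{n} \log Z^{ST}_n(\beta,u) \;\geq\; \max\{\phi(\beta), \ \beta u + \log d_1\} \quad \mathrm{a.s.}
\]
Sandwiching these yields equality of the $\liminf$ and $\limsup$, hence the limit exists almost surely and equals $\max\{\phi(\beta), \beta u + \log d_1\}$.

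There is no real obstacle here: the only substantive input beyond the two propositions is the identification $\phi(\beta) = \lambda(\beta) + \log d$ on $[0,\beta_c]$, which is supplied by Equation~(\ref{qfree}). The reason the argument breaks down for $\beta > \beta_c$ is precisely that then $\phi(\beta) < \lambda(\beta) + \log d$ (by Equation~(\ref{eq.philtlam})), so the upper and lower bounds from Propositions~\ref{prop-UB0} and \ref{prop-LB0} no longer match, and a more delicate analysis is required. That delicate analysis is the content of Theorem~\ref{mainthST}(b), not of this corollary.
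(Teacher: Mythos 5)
Your proof is correct and is exactly the paper's argument: the authors likewise note that $\phi(\beta)=\lambda(\beta)+\log d$ for $\beta\leq\beta_c$ and then state that Propositions~\ref{prop-LB0} and \ref{prop-UB0} immediately imply the corollary. Nothing further is needed.
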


Since $\phi(\beta)\,<\,\lambda(\beta)+\log d$ for $\beta > \beta_c$ (Equation (\ref{eq.philtlam})), 
we must ask whether
the conclusion of Corollary \ref{subtreecor1} holds for all values of $\beta$.  We shall show 
that the answer is no.
This is interesting because it tells us that the dominant terms in the partition function 
are neither the walks that spend almost all their time in the defect subtree nor the walks
that spend hardly any time in the defect subtree.  This is in direct contrast to the case of 
a defect branch ($d_1=1$), which we examined in Section \ref{dfbranchsec}.

\medskip

The next lemma will be needed for an application of Chebychev's Inequality.
\begin{lemma}
   \label{lem.Theta}
(a) For every $\beta \geq 0$, the limit 
\[     \Theta \;\equiv \Theta(\beta) \;:=\; \lim_{n\rightarrow\infty}\left(  
    \hbox{Var}(e^{\beta V^*} Z^{HD}_n(\beta))\right)^{1/n}   
\]
exists and equals $ \max\{d^2e^{2\lambda(\beta)},d e^{\lambda(2\beta)}\}$.

\smallskip
\noindent
(b)  For every $\beta>\beta_c$, we have $\Theta \,=\, d e^{\lambda(2\beta)}$.
\end{lemma}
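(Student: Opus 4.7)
The plan is to compute the variance explicitly and read off its exponential growth rate. Since $V^*$ is independent of $Z_n^{HD}(\beta)$, I first note that
\[
   \hbox{Var}(e^{\beta V^*} Z_n^{HD}(\beta))
   \;=\; e^{\lambda(2\beta)}\, \mathbf{E}\bigl[(Z_n^{HD}(\beta))^2\bigr]
   \,-\, e^{2\lambda(\beta)}\,\bigl(d^n e^{n\lambda(\beta)}\bigr)^2.
\]
To compute the second moment, I would expand $(Z_n^{HD}(\beta))^2$ as a sum over ordered pairs of root-to-generation-$n$ paths and classify each pair by the integer $m$ at which the two paths last agree. If the paths share exactly the first $m$ nodes after the root ($0\le m\le n-1$), there are $(d-1)d^{2n-m-1}$ such pairs, each contributing $e^{m\lambda(2\beta)+2(n-m)\lambda(\beta)}$; identical paths ($m=n$) contribute a further $d^n e^{n\lambda(2\beta)}$. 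Writing $a=e^{\lambda(2\beta)}$, $b=e^{\lambda(\beta)}$, and $r=a/(db^2)$, the resulting geometric sum in $m$ collapses (for $r\ne 1$) and substitution into the variance identity above yields, after simplification,
\[
   \hbox{Var}(e^{\beta V^*} Z_n^{HD}(\beta))
   \;=\;  \frac{a-b^2}{1-r}\,\bigl[(db)^{2n}-r\,(da)^n\bigr].
\]

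The prefactor is nonzero because strict convexity of $\lambda$ forces $a>b^2$ for $\beta>0$. In the regime $r<1$ the prefactor is positive and $(db)^{2n}$ dominates $r(da)^n$ for large $n$; in the regime $r>1$ the prefactor is negative and so is the bracket (since $(da)^n$ outgrows $(db)^{2n}$). In both regimes the product is positive, and taking $n$-th roots produces $\max\{(db)^2,\,da\}=\max\{d^2 e^{2\lambda(\beta)},\,d e^{\lambda(2\beta)}\}$. The boundary case $r=1$ replaces the geometric sum by one linear in $n$, giving a variance of order $n\,(db)^{2n}$, whose $n$-th root has the same limit. This proves part (a).

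For part (b) it suffices to show $d e^{\lambda(2\beta)}\ge d^2 e^{2\lambda(\beta)}$ whenever $\beta>\beta_c$, i.e., that $g(\beta):=\lambda(2\beta)-2\lambda(\beta)-\log d\ge 0$. Strict convexity of $\lambda$ gives $g'(\beta)=2(\lambda'(2\beta)-\lambda'(\beta))>0$ on $(0,\infty)$, so $g$ is strictly increasing and it is enough to check $g(\beta_c)\ge 0$. Combining the defining equation $\lambda(\beta_c)+\log d=\beta_c\lambda'(\beta_c)$ from Lemma~\ref{criticalbeta} with the tangent-line inequality $\lambda(2\beta_c)\ge \lambda(\beta_c)+\beta_c\lambda'(\beta_c)$ gives $\lambda(2\beta_c)\ge 2\lambda(\beta_c)+\log d$, as required.

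The main obstacle I anticipate lies in the sign bookkeeping for part (a): after the variance is collected into a single closed form, both the prefactor $(a-b^2)/(1-r)$ and the bracket $(db)^{2n}-r(da)^n$ change sign simultaneously as $r$ crosses $1$, so one must confirm in each regime that the dominant exponential survives with a positive coefficient (as it must, since variances are nonnegative) and is not cancelled by the opposite-sign correction before the $n$-th root limit can be read off.
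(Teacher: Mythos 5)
Your proof is correct, and its combinatorial core is the same as the paper's: both compute $\mathbf{E}[(Z^{HD}_n(\beta))^2]$ by summing over ordered pairs of paths classified by the generation at which they separate, and your count $(d-1)d^{2n-m-1}$ with weight $e^{m\lambda(2\beta)+2(n-m)\lambda(\beta)}$ matches the paper's formula exactly. The two arguments diverge in how they pass from the second moment to the variance: the paper sandwiches $\mathrm{Var}(e^{\beta V^*}Z_n)$ between $\mathrm{Var}(e^{\beta V^*})\,\mathbf{E}(Z_n^2)$ and $\mathbf{E}(e^{2\beta V^*})\,\mathbf{E}(Z_n^2)$, so the growth rate is read off from a sum of \emph{positive} terms and no cancellation can occur, whereas you compute the variance in closed form and must then verify (as you correctly do) that the prefactor and the bracket change sign together across $r=1$. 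The sandwich buys simplicity; your exact formula buys more information (e.g.\ the precise constant) at the cost of the sign bookkeeping you flagged. I checked your simplification to $\frac{a-b^2}{1-r}\bigl[(db)^{2n}-r(da)^n\bigr]$ and it is algebraically correct, as is the boundary case $r=1$. For part (b) your route also differs slightly: the paper argues by contradiction, using the Mean Value Theorem to show that $\log d>\lambda(2\beta)-2\lambda(\beta)$ would force $f(\beta)>0$; you instead show directly that $g(\beta)=\lambda(2\beta)-2\lambda(\beta)-\log d$ is strictly increasing and that $g(\beta_c)\geq 0$ via the tangent-line inequality at $\beta_c$ combined with the defining relation $\lambda(\beta_c)+\log d=\beta_c\lambda'(\beta_c)$ --- an equivalent but arguably cleaner use of convexity, which moreover yields the strict inequality $g(\beta)>0$ needed later in Equation (\ref{eq.lamlam}). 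One shared caveat: at $\beta=0$ the variance is identically zero, so the stated limit really requires $\beta>0$; your write-up makes this restriction explicit while the paper leaves it implicit, and it is harmless since the lemma is only applied for $\beta>\beta_c$.
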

\begin{proof}[Proof of Lemma~\ref{lem.Theta}]
$(a)$ Let us write $Y=e^{\beta V^*}$ and $Z_n =Z_n^{HD}(\beta)$. Recall that $V^*$ is a copy of $V$, independent of everything else. Since $Y$ and 
$Z_n$ are independent, it is easy to see that 
$\textrm{Var}(YZ_n)\,=\, \mathbf{E}(Z_n^2)\textrm{Var}(Y)+(\mathbf{E}(Y))^2\textrm{Var}(Z_n)$, and hence that
\[   \mathbf{E}(Z_n^2)\textrm{Var}(Y)  \;\leq \;  \textrm{Var}(YZ_n)  \;\leq \;  
    \mathbf{E}((YZ_n)^2)  \;=\; \mathbf{E}(Y^2)\mathbf{E}(Z_n^2) \,. 
\]
Since $Y$ does not depend on $n$, we deduce that  
$\Theta =\lim_{n\rightarrow\infty}\mathbf{E}(Z_n^2)^{1/n}$ if this limit exists.  
Using this observation,  
part (a) follows from the following calculation:
\begin{eqnarray*}
  \mathbf{E}[(Z^{HD}_n(\beta))^2]&=& \sum_{{W:\roott\rightarrow (n,\cdot)}}
    \sum_{{W':\roott\rightarrow (n,\cdot)}}
    \mathbf{E}[e^{\beta V\langle W\rangle+\beta V\langle W'\rangle}]\\
       &=&d^n\left(\sum_{k=0}^{n-1}(d-1)d^{n-k-1}(e^{\lambda(2\beta)})^k (e^{2\lambda(\beta)})^{n-k}
       +e^{n\lambda(2\beta)}\right)\\
   & = & d\,(d-1)\,e^{2\lambda(\beta)}
    \sum_{k=0}^{n-1}  \left(de^{\lambda(2\beta)}\right)^k
   \left(d^2e^{2\lambda(\beta)}\right)^{n-1-k} +\left(de^{\lambda(2\beta)}\right)^n   \\
     & = &    d(d-1)  
     \,e^{2\lambda(\beta)}\, 
      \frac{(de^{\lambda(2\beta)})^n-(d^2e^{2\lambda(\beta)})^n}{
    de^{\lambda(2\beta)}-d^2e^{2\lambda(\beta)} }
       +\left(de^{\lambda(2\beta)}\right)^n.
\end{eqnarray*}

$(b)$ Fix $\beta>\beta_c$, and assume that $d^2e^{2\lambda(\beta)}>de^{\lambda(2\beta)}$.  
That is, $\log d> \lambda(2\beta)-2\lambda(\beta)$. 
By the Mean Value Theorem, we know that $\lambda(2\beta)=\lambda(\beta)+\beta \lambda{'}(\tilde \beta)$ 
for some $\beta<\tilde \beta<2\beta.$
Recalling Equation~(\ref{thatf}) and Lemma~\ref{criticalbeta}, we find that
\begin{eqnarray*}
f(\beta)&>& \lambda(\beta)+\lambda(2\beta)-2\lambda(\beta)-\beta \lambda'(\beta)\\
&=&\beta (\lambda'(\tilde \beta)-\lambda'(\beta))\\
&>&0    \hspace{22mm}\hbox{(since $\lambda''>0$).}
\end{eqnarray*} 
This contradicts the fact that $f(\beta)<0$ for $\beta\in (\beta_c,\infty)$.
Therefore $d^2e^{2\lambda(\beta)}\leq de^{\lambda(2\beta)}$.  Part (b) follows.
\end{proof}

The following lemma plays an important role in proving that the partially pinned phase is not empty for $\beta>\beta_c$. 
\begin{lemma}
\label{lem-LBt1}
Fix $\beta>0$.  Let $t$ be a real number in $(0,1)$ such that 
\begin{equation}
    \label{eq.tdef}
        \left(\frac{\Theta}{d^2e^{2\lambda(\beta)}} \right)^{1-t} \;<\,  d_1^t \,.
 \end{equation}
Then for every $u$,
\begin{equation}
  \label{eq.LBt1conc}
      \liminf_{n\rightarrow \infty}  \frac{1}{n}\,\log Z^{ST}_n(\beta,u)  
   \;\geq \;  (\lambda(\beta)+\log d)(1-t) \,+\,   (\beta u+\log d_1)t  \hspace{3.5mm}  \hbox{a.s. } 
\end{equation}
\end{lemma}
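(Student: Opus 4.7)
The plan is to exploit the decomposition
\[
Z^{ST}_n(\beta,u) \;\geq\; e^{\beta k u}\,G^{ST}_{k,n}(\beta)
\]
coming from Equation~(\ref{eq.ZGdecomp}), applied at the single index $k=k_n:=\lfloor tn\rfloor$. Heuristically, the mean of $G^{ST}_{k_n,n}(\beta)$ computed in Equation~(\ref{eq.Gmean}) grows like $d_1^{tn} e^{(1-t)n(\lambda(\beta)+\log d)}$, and multiplying by the deterministic prefactor $e^{\beta k_n u}$ produces exactly the exponential rate claimed in Equation~(\ref{eq.LBt1conc}). So the task reduces to showing that $G^{ST}_{k_n,n}(\beta)$ stays within, say, a factor of $2$ of its mean for all sufficiently large $n$, almost surely.

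The main tool for this concentration is Chebychev's inequality together with the variance formula in Equation~(\ref{eq.Gvar}) and the limit $\Theta$ from Lemma~\ref{lem.Theta}. Fix a small $\epsilon>0$ with $((\Theta+\epsilon)/(d^2e^{2\lambda(\beta)}))^{1-t}<d_1^t$, which is possible by the hypothesis~(\ref{eq.tdef}) and continuity. For $n-k-1$ large enough, Lemma~\ref{lem.Theta} gives $\textrm{Var}(e^{\beta V^*}Z^{HD}_{n-k-1}(\beta))\leq (\Theta+\epsilon)^{n-k-1}$, so
\[
\frac{\textrm{Var}(G^{ST}_{k_n,n}(\beta))}{(\mathbf{E}\,G^{ST}_{k_n,n}(\beta))^2}
\;\leq\; \frac{C}{d_1^{k_n}}\!\left(\frac{\Theta+\epsilon}{d^2 e^{2\lambda(\beta)}}\right)^{n-k_n}
\;\leq\; C'\!\left(\frac{((\Theta+\epsilon)/(d^2e^{2\lambda(\beta)}))^{1-t}}{d_1^{t}}\right)^{\!n}.
\]
By the choice of $\epsilon$, this decays exponentially in $n$, so Chebychev's inequality yields $\mathbf{P}(G^{ST}_{k_n,n}(\beta)\leq \tfrac12\mathbf{E}\,G^{ST}_{k_n,n}(\beta))$ summable in $n$.

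By the Borel--Cantelli lemma, almost surely $G^{ST}_{k_n,n}(\beta)>\tfrac12\mathbf{E}\,G^{ST}_{k_n,n}(\beta)$ for all but finitely many $n$. Substituting into $Z^{ST}_n(\beta,u)\geq e^{\beta k_n u}G^{ST}_{k_n,n}(\beta)$, taking $\frac{1}{n}\log$, and using $k_n/n\to t$ with Equation~(\ref{eq.Gmean}) gives
\[
\liminf_{n\to\infty}\frac{1}{n}\log Z^{ST}_n(\beta,u)\;\geq\;\beta t u + t\log d_1+(1-t)(\lambda(\beta)+\log d),
\]
which is precisely Equation~(\ref{eq.LBt1conc}).

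The only delicate step is the interplay between Lemma~\ref{lem.Theta}, which only controls $\textrm{Var}(e^{\beta V^*}Z^{HD}_m(\beta))$ up to sub-exponential factors of the form $(\Theta+\epsilon)^m$, and the strict inequality in the hypothesis~(\ref{eq.tdef}). The strictness is exactly what provides the slack needed to absorb the $\epsilon$-loss while keeping the Chebychev bound summable; once that is set up, the remainder of the argument is a standard mean/variance computation followed by Borel--Cantelli.
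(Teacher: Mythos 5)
Your proposal is correct and follows essentially the same route as the paper: the lower bound $Z^{ST}_n(\beta,u)\geq e^{\beta k(n)u}G^{ST}_{k(n),n}(\beta)$ along a sequence $k(n)/n\to t$, Chebychev's inequality with the mean and variance formulas (\ref{eq.Gmean})--(\ref{eq.Gvar}) and the growth rate $\Theta$ from Lemma~\ref{lem.Theta}, and Borel--Cantelli to get the almost-sure statement. Your explicit $\epsilon$-slack to convert the limit defining $\Theta$ into a usable bound $(\Theta+\epsilon)^{m}$ is a slightly more careful rendering of what the paper handles by taking $\limsup_{n}\mathbf{P}(\cdot)^{1/n}$, but it is the same argument.
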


\begin{rem}
   \label{rem.LBt1}
   Observe that Equation (\ref{eq.tdef}) holds when $t$ is close enough to 1.
\end{rem}

Before we prove Lemma \ref{lem-LBt1}, we shall show how it can be used.

\begin{prop}
   \label{prop-ineqex}
Assume $\beta>\beta_c$.  Then the strict inequality 
\begin{equation}
   \label{eq.strictineq}
  \liminf_{n\rightarrow \infty}  \frac{1}{n}\,\log Z^{ST}_n(\beta,u) 
   \; >\;  \max\{\phi(\beta),\   \beta u+\log d_1\}    \hspace{7mm}\hbox{a.s.}
\end{equation}   
holds  if either of the following  hold:
\\
(a)  $\phi(\beta) \,=\,  \beta u +\log d_1$; or
\\
(b)  $\lambda(\beta)+\log d \,>\,  \beta u +\log d_1 \,> \,  \phi(\beta)$.
\\
In particular,  $(\beta,u)$ is in the partially pinned phase ${\cal R}_{PP}$ if
$(\phi(\beta)-\log d_1)/\beta  \,\leq \,u\,<\,F(\beta)$.
\end{prop}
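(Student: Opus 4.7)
The plan is to derive this strict lower bound directly from Lemma~\ref{lem-LBt1}, by choosing an appropriate $t\in(0,1)$ and showing that the convex combination it produces strictly dominates the maximum. Write
\[
L(t) \;:=\; (1-t)(\lambda(\beta)+\log d) \,+\, t(\beta u + \log d_1).
\]
By Remark~\ref{rem.LBt1}, condition~(\ref{eq.tdef}) holds for all $t$ sufficiently close to $1$, so Lemma~\ref{lem-LBt1} yields $\liminf_{n\to\infty}\frac{1}{n}\log Z_n^{ST}(\beta,u) \geq L(t)$ almost surely for any such $t$. It therefore suffices to exhibit a single $t\in(0,1)$ for which condition~(\ref{eq.tdef}) holds and $L(t)>\max\{\phi(\beta),\beta u+\log d_1\}$.

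The crucial input is Equation~(\ref{eq.philtlam}): since $\beta>\beta_c$, one has $\phi(\beta)<\lambda(\beta)+\log d$. In case~(b), both $\phi(\beta)$ and $\beta u+\log d_1$ are strictly less than $\lambda(\beta)+\log d$, so $L(t)$ is a strict convex combination of values both strictly below $\lambda(\beta)+\log d$ and strictly above $\phi(\beta)$; in fact for every $t\in(0,1)$,
\[
L(t) \;>\; \beta u+\log d_1 \;>\; \phi(\beta),
\]
using $\lambda(\beta)+\log d>\beta u+\log d_1$ for the first inequality. In case~(a), $\phi(\beta)=\beta u+\log d_1$, and again the assumption $\beta>\beta_c$ gives $\lambda(\beta)+\log d>\phi(\beta)=\beta u+\log d_1$, so $L(t)>\beta u+\log d_1=\phi(\beta)$ for every $t\in(0,1)$. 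In both cases, Remark~\ref{rem.LBt1} lets us pick $t$ close enough to $1$ that~(\ref{eq.tdef}) holds, and the desired strict inequality (\ref{eq.strictineq}) follows.

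For the final ``in particular'' assertion, suppose $(\phi(\beta)-\log d_1)/\beta \leq u < F(\beta)$. The left inequality rewrites as $\beta u+\log d_1 \geq \phi(\beta)$ and the right as $\beta u+\log d_1 < \lambda(\beta)+\log d$. Thus either $\beta u+\log d_1=\phi(\beta)$ (case (a)) or $\phi(\beta)<\beta u+\log d_1<\lambda(\beta)+\log d$ (case (b)), so (\ref{eq.strictineq}) applies. Since $\overline{f}^{ST}(\beta,u)\geq \underline{f}^{ST}(\beta,u)$, the strict inequality on the liminf forces $\overline{f}^{ST}(\beta,u)>\max\{\phi(\beta),\beta u+\log d_1\}$, which is exactly the condition defining ${\cal R}_{PP}$ in Definition~\ref{phase-defn}.

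The proof is essentially bookkeeping once Lemma~\ref{lem-LBt1} is in hand, so the only genuine obstacle lies in that lemma (particularly in obtaining a quantitative second-moment bound that allows us to push $t$ close to $1$); the present proposition itself requires no further probabilistic estimates, only the convexity observation above combined with the strict gap $\lambda(\beta)+\log d>\phi(\beta)$ valid throughout $\beta>\beta_c$.
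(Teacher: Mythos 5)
Your proof is correct and follows essentially the same route as the paper: both invoke Lemma~\ref{lem-LBt1} with $t$ close to $1$ (via Remark~\ref{rem.LBt1}) and observe that the strict gap $\phi(\beta)<\lambda(\beta)+\log d$ from Equation~(\ref{eq.philtlam}) forces the convex combination on the right of (\ref{eq.LBt1conc}) to strictly exceed $\max\{\phi(\beta),\beta u+\log d_1\}$ in both cases. Your explicit treatment of the ``in particular'' clause is a slight elaboration of what the paper leaves implicit, and it is accurate.
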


The proof of Theorem \ref{mainthST} will also use Lemma \ref{lem-LBt1} to prove that 
the inequality (\ref{eq.strictineq}) also holds if $\beta u+\log d_1$ is smaller than, but 
sufficiently close to, $\phi(\beta)$.

\begin{proof}[Proof of Proposition \ref{prop-ineqex}:] 
Let $M\,=\, \max\{\phi(\beta),\   \beta u+\log d_1\}$.

\smallskip
\noindent
(a)  In this case, $M\,=\, \phi(\beta) \,=\,  \beta u +\log d_1$.  Since $\phi(\beta)<\lambda(\beta)+\log d$
(by Equation (\ref{eq.philtlam})), 
we see that the right side of Equation (\ref{eq.LBt1conc}) is strictly greater than $M$ for every $t$ 
in the interval $(0,1)$.  By Remark \ref{rem.LBt1}, the result follows.

\smallskip
\noindent
(b)  In this case, $\lambda(\beta)+\log d \,>\, M \,=\, \beta u+\log d_1$.  As in part (a), the result 
follows.
\end{proof}

\begin{proof}[Proof of Lemma \ref{lem-LBt1}:]
For $0\leq k< n$, define the event 
\[    {\cal A}[k,n]  \;:=   \;  
   \left\{  \left|G^{ST}_{k,n}(\beta)-\mathbf{E}(G^{ST}_{k,n}(\beta)) \right| \,\geq \,
     \frac{1}{2}\, \mathbf{E}(G^{ST}_{k,n}(\beta))  \,\right\}  \,.
\]
By Chebychev's Inequality, and Equations~(\ref{eq.Gmean}) and (\ref{eq.Gvar}), we have 
\begin{eqnarray*}
     \mathbf{P}({\cal A}[k,n]) & \leq & 
        \frac{4\, \textrm{Var}(G^{ST}_{k,n}(\beta))}{(\mathbf{E}(G^{ST}_{k,n}(\beta))^2}    \\
       & =  &   \frac{4\, \textrm{Var}(e^{\beta V^*} Z_{n-k-1}^{HD}(\beta))}{d_1^k(d-d_1) 
            d^{2(n-k-1)} e^{2(n-k)\lambda(\beta)} }
%\hspace{5mm}\hbox{(by Eqs.\ (\ref{eq.Gmean}) and (\ref{eq.Gvar})).}
 \end{eqnarray*}
 
 Next, we let $k=k(n)$ be an integer-valued function of $n$ with the property that
 \[  \lim_{n\rightarrow\infty} \frac{k(n)}{n}   \; = \; t \,     \]    
 where $t$ is given in the statement of the Lemma.  Then
 \begin{equation}
     \label{eq.PrAlim}
     \limsup_{n\rightarrow\infty} \mathbf{P}({\cal A}[k(n),n])^{1/n}   \;\leq \; 
       \frac{\Theta^{1-t}}{d_1^t (de^{\lambda(\beta)})^{2(1-t)} }    \;<\; 1  \,
 \end{equation}
 where the final inequality is a consequence of Equation (\ref{eq.tdef}).   Therefore
$\mathbf{P}({\cal A}[k(n),n])$ decays to 0 exponentially rapidly in $n$, and 
 hence the Borel-Cantelli Lemma shows that (with probability 1) ${\cal A}[k(n),n]$  occurs for
 only finitely many values of $n$. 
      
  Observe that $G^{ST}_{k,n}(\beta) > \mathbf{E}(G^{ST}_{k,n}(\beta))/2$   on ${\cal A}[k,n]^c$       
  (where ${}^c$ denotes complement).  Therefore
\[       G^{ST}_{k(n),n}(\beta)    \;\geq \;    \frac{1}{2} \,\mathbf{E}(G^{ST}_{k,n}(\beta)) 
   \,{\bf 1}({\cal A}[k(n),n]^c) \,.
\]
The final conclusion of the previous paragraph, together with Equation (\ref{eq.Gmean}), shows
that 
\begin{equation}
   \label{eq.Gknt}
        \liminf_{n\rightarrow \infty} G^{ST}_{k(n),n}(\beta)^{1/n}   \;\geq \;  
          d_1^t(de^{\lambda(\beta)})^{1-t}   \hspace{5mm}\hbox{a.s.}
\end{equation}   
Finally, Equation (\ref{eq.ZGdecomp})  implies that $Z^{ST}_n(\beta,u) \,\geq \,
e^{\beta k(n)u}G^{ST}_{k(n),n}(\beta)$.
Lemma~\ref{lem-LBt1} follows immediately from this and  Equation (\ref{eq.Gknt}).
\end{proof}

We define the critical curve as
\begin{equation}
u^{ST}_c(\beta):=\inf\{u\in \mathbb{R}: \overline{f}^{ST}(\beta,u)>\phi(\beta)\}.
\end{equation}

Then we have the following.
\bigskip
\begin{prop}
  \label{prop-desorblargebeta}
  Assume $\beta \geq \beta_c$ and $u\leq \Psi$, where
\[    \Psi   \;:=\;   \frac{1}{\beta_c}(\lambda(\beta_c)+\log d-\log d_1) \,.
\]
Then
\begin{equation}
     \label{eq.limisphi}
     \lim_{n\rightarrow\infty} \frac{1}{n}\log Z^{ST}_n(\beta,u)    \;=\; \phi(\beta)   \hspace{5mm}a.s.
 \end{equation}
That is, $u^{ST}_c(\beta)\,\geq \,\Psi$ for all $\beta\geq \beta_c$.
\end{prop}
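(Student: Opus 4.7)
The lower bound $\liminf_{n\to\infty} n^{-1}\log Z_n^{ST}(\beta,u)\geq \phi(\beta)$ is already guaranteed by Proposition~\ref{prop-LB0}, so my plan is to establish the matching upper bound $\limsup_{n\to\infty} n^{-1}\log Z_n^{ST}(\beta,u)\leq \phi(\beta)$ almost surely. Since every summand in the decomposition~(\ref{eq.ZGdecomp}) is nondecreasing in $u$, the partition function $Z_n^{ST}(\beta,u)$ is monotone in $u$, so it suffices to treat $u=\Psi$. A direct calculation using $\phi(\beta)=(\beta/\beta_c)(\lambda(\beta_c)+\log d)$ then shows that $\beta_c\Psi+\log d_1=\phi(\beta_c)$ and, for every $\beta\geq \beta_c$,
\[
  \beta\Psi+\log d_1 \;=\; \phi(\beta)+(1-\beta/\beta_c)\log d_1 \;\leq\; \phi(\beta).
\]

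The obstacle for $\beta>\beta_c$ is that Proposition~\ref{prop-UB0} only gives the weaker bound $\lambda(\beta)+\log d>\phi(\beta)$, because in the strong-disorder regime the mean of $Z^{HD}_m(\beta)$ vastly overestimates its typical value. The key idea is to replace Markov's inequality applied to $G^{ST}_{k,n}$ by a fractional-moment bound with exponent $s:=\beta_c/\beta\in(0,1]$. Subadditivity of $x\mapsto x^s$ yields
\[
  \mathbf{E}\bigl[(Z^{HD}_m(\beta))^s\bigr]
  \;\leq\; \sum_{W:\roott\to(m,\cdot)}\mathbf{E}\bigl[e^{s\beta V\langle W\rangle}\bigr]
  \;=\; e^{m(\lambda(s\beta)+\log d)} \;=\; e^{m\phi(\beta_c)},
\]
and since $G^{ST}_{k,n}(\beta)$ is a sum of $d_1^k(d-d_1)$ independent copies of $e^{\beta V^*}Z^{HD}_{n-k-1}(\beta)$, the same trick gives $\mathbf{E}[(G^{ST}_{k,n}(\beta))^s]\leq d_1^k(d-d_1)\,e^{\lambda(s\beta)}\,e^{(n-k-1)\phi(\beta_c)}$.

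Fix any $A>\phi(\beta)$ and apply Markov's inequality to $(e^{\beta k u}G^{ST}_{k,n}(\beta))^s$. Using $s\beta=\beta_c$ together with $\beta_c \Psi+\log d_1 = \phi(\beta_c)$, the exponent collapses and gives
\[
  \mathbf{P}\bigl(e^{\beta ku}G^{ST}_{k,n}(\beta)>e^{nA}\bigr)
  \;\leq\; C\,e^{k(\beta_c u+\log d_1)+(n-k)\phi(\beta_c)-nAs}
  \;\leq\; C\,e^{-ns(A-\phi(\beta))},
\]
where $C$ depends only on $\beta$, $d$ and $d_1$, and where I used $\phi(\beta_c)=s\phi(\beta)$. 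The final term $d_1^ne^{\beta n u}$ in~(\ref{eq.ZGdecomp}) is at most $e^{n\phi(\beta)}<e^{nA}$ by the inequality displayed above. A union bound over the $n+1$ summands gives $\mathbf{P}(Z^{ST}_n(\beta,u)>(n+1)e^{nA})\leq Cn\,e^{-ns(A-\phi(\beta))}$, which is summable in $n$; Borel-Cantelli then delivers $\limsup_{n\to\infty} n^{-1}\log Z^{ST}_n(\beta,u)\leq A$ a.s., and letting $A\downarrow\phi(\beta)$ closes the argument. The delicate point is precisely the choice $s=\beta_c/\beta$, which makes $\lambda(s\beta)=\lambda(\beta_c)$ and turns the fractional moment into the sharp rate $e^{m\phi(\beta_c)}$; direct tail bounds via concentration~(\ref{concentration}) would fail here because a union bound over the $d_1^k$ independent subtrees loses too much when $k$ is close to $n$.
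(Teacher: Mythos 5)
Your proof is correct and is essentially the argument in the paper: the paper also establishes the upper bound by applying Markov's inequality at the effective exponent $\beta_c/\beta$, via the pointwise inequality $G^{ST}_{k,n}(\beta)^{1/\beta}\leq G^{ST}_{k,n}(\beta_c)^{1/\beta_c}$ (Lemma 5 of \cite{BPP}) followed by a first-moment bound on $G^{ST}_{k,n}(\beta_c)$, which is exactly your fractional-moment bound derived from subadditivity of $x\mapsto x^s$. The remaining steps (handling the $d_1^ne^{\beta nu}$ term, the union bound over $k$, and Borel--Cantelli, combined with Proposition~\ref{prop-LB0} for the matching lower bound) coincide with the paper's.
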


\medskip
\noindent
\begin{proof}[Proof of Proposition~\ref{prop-desorblargebeta}]
Fix $\beta \geq \beta_c$ and $u\leq \Psi$.   Let $\epsilon>0$ and let
\[     C\;:=\;  \phi(\beta)\,+\,\epsilon  \;=\; \frac{\beta}{\beta_c}(\lambda(\beta_c)+\log d)  \,+\,\epsilon \,.
\]
Our approach is similar to the proof of Proposition \ref{prop-UB0}.  As in Equation (\ref{eq.ZsumP}),
we have  
\begin{eqnarray}
   & \mathbf{P} \left(  \frac{1}{n} \log Z_n^{ST}(\beta,u)  \,>\,C\right) \;=\;
         \mathbf{P} \left(  \sum_{k=0}^n e^{\beta ku}G^{ST}_{k,n}(\beta) \,>\,e^{Cn}\right)
          \nonumber  \\
   & \hspace{10mm} \leq \;
     \sum_{k=0}^{n-1} \mathbf{P}  \left(e^{\beta ku} G^{ST}_{k,n}(\beta)\,>\,
        \frac{e^{Cn}}{n+1}\right)
         \,+\,    {\bf 1}\left(d_1^n e^{\beta nu}\,>\,\frac{e^{Cn}}{n+1}\right) \,.
      \label{eq.ZsumP1}
\end{eqnarray}
Since $\beta u+\log d_1\,\leq \,\beta \Psi+(\beta/\beta_c)\log d_1\,<\,C$, the final term in Equation (\ref{eq.ZsumP1}) is 0 for all
sufficiently large $n$.

Since $\beta\geq \beta_c$, we have $G^{ST}_{k,n}(\beta)^{1/\beta}\,\leq \, 
G^{ST}_{k,n}(\beta_c)^{1/\beta_c}$ by Lemma 5 of \cite{BPP},  and thus
\begin{eqnarray*}
  \mathbf{P}  &\left(e^{\beta ku} G^{ST}_{k,n}(\beta)\,>\,  \frac{e^{Cn}}{n+1}\right)
    \\
    & \leq \;   \mathbf{P}  \left(G^{ST}_{k,n}(\beta)^{\beta/\beta_c}\,>\, 
    \frac{e^{Cn-\beta ku}}{n+1}\right)
   \\
    & = \;   \mathbf{P}  \left(G^{ST}_{k,n}(\beta)\,>\,
       \frac{e^{(\beta_c/\beta)Cn-\beta_c ku}}{(n+1)^{\beta_c/\beta}}\right)
        \\
    &  \leq \;  \frac{(n+1)^{\beta_c/\beta}e^{\beta_c ku}\,
    \mathbf{E} (G^{ST}_{k,n}(\beta))}{e^{(\beta_c/\beta)Cn}}
          \hspace{18mm}\hbox{(by Markov's Inequality)}     \\
     & = \; \frac{(n+1)^{\beta_c/\beta}e^{\beta_c ku}\,d_1^{k} (d-d_1)d^{n-k-1}
     e^{(n-k)\lambda(\beta_c)}}{e^{(\beta_c/\beta)Cn}}
        \hspace{8mm}\hbox{(by Equation \ (\ref{eq.Gmean}))}
%    \hbox{CHECK:  $n-k$ or $n-k-1$ ??   $k$ or $k-1$ ??}
        \\
    & \leq \; (n+1)   \, \left(\frac{e^{\lambda(\beta_c)+\log d} }{e^{C\beta_c/\beta}}\right)^{n-k}
            \left(\frac{e^{\beta_c u+\log d_1}}{e^{C\beta_c/\beta}}\right)^k  \,.
 \end{eqnarray*}
Since
\[    \frac{C\beta_c}{\beta}  \;=\;   \lambda(\beta_c)+\log d +\frac{\beta_c\epsilon}{\beta}
      \;= \;   \beta_c\Psi  +\log d_1 + \frac{\beta_c\epsilon}{\beta} 
      \;\geq \;   \beta_cu+\log d_1 +\frac{\beta_c\epsilon}{\beta}\,,
\]
we conclude that for large $n$,
\[    \mathbf{P} \left(  \frac{1}{n} \log Z_n^{ST}(\beta)  \,>\,C\right)   \;\leq \;
 n(n+1)\,e^{-n\epsilon \beta_c/\beta}  \,.
\]
It follows from the Borel-Cantelli Lemma that
$\limsup_{n\rightarrow\infty}\frac{1}{n}\log Z_n^{ST}(\beta,u) \,\leq \,\phi(\beta)+\epsilon$ with
probability 1.  Since this holds for every positive $\epsilon$, we can combine this with
Proposition \ref{prop-LB0} to complete the proof of Equation (\ref{eq.limisphi}).
\end{proof}

We are now ready to prove the main results of this section.

\begin{proof}[Proof of Proposition~\ref{prop-FJ}]  
Note that $\lambda(\beta_c)+\log d=\phi(\beta_c)$ and hence $F(\beta_c)=J(\beta_c)$. 
Fix $\beta> \beta_c$.  We have $\lambda(\beta)+\log d>\phi(\beta)$ (by Equation (\ref{eq.philtlam})) 
and 
\begin{equation}
   \label{eq.lamlam}
   \lambda(2\beta)-2\lambda(\beta)-\log d>0
\end{equation}
by Lemma \ref{lem.Theta}(a,b).
Hence the second and third inequalities follow.

It remains to prove the first inequality in the proposition.  
Since $\lambda$ is strictly convex, we can consider slopes of secant lines to obtain
\begin{equation}
    \label{eq.slopes}
     \frac{\lambda(2\beta)-\lambda(\beta)}{\beta}   \;>\;
       \frac{\lambda(\beta)-\lambda(\beta_c)}{\beta-\beta_c}  \,.
\end{equation}   
Now replace $\phi(\beta)$ by $\frac{\beta}{\beta_c}(\lambda(\beta_c)+\log d)$ in the definition
of $J(\beta)$ in Equation~(\ref{functionJ}), obtaining
\begin{eqnarray*}
   J(\beta) & = & \frac{\lambda(\beta_c)+\log d}{\beta_c}  \,-\, \frac{\log d_1}{\beta}\left( 1\,+\,
       \frac{\lambda(\beta)+\log d-\phi(\beta)}{\lambda(2\beta)-2\lambda(\beta)-\log d}\right)   \\
       & = & F(\beta_c)+\frac{\log d_1}{\beta_c} -
       \frac{\log d_1}{\beta}\left( \frac{\lambda(2\beta)-\lambda(\beta)-\phi(\beta)
             }{\lambda(2\beta)-2\lambda(\beta)-\log d}\right) \,.       
\end{eqnarray*}
Now, some algebra gives
\begin{eqnarray*}
\frac{J(\beta)-F(\beta_c)}{\log d_1}  & = & \frac{1}{\beta_c}  -\frac{1}{\beta}
     \left( \frac{\lambda(2\beta)-\lambda(\beta)-(\beta/\beta_c)[\lambda(\beta_c)+\log d]
             }{\lambda(2\beta)-2\lambda(\beta)-\log d}\right)     \\
   & = & \frac{(\beta-\beta_c)[\lambda(2\beta)-\lambda(\beta)] -\beta[\lambda(\beta)-\lambda(\beta_c)]
      }{\beta \beta_c  [\lambda(2\beta)-2\lambda(\beta)-\log d]}
      \\
      & > & 0   \hspace{18mm}\hbox{(by Equations (\ref{eq.lamlam}) and (\ref{eq.slopes})).}
\end{eqnarray*}
Therefore $J(\beta)>F(\beta_c)$, and the proof is complete.
\end{proof}

\begin{proof}[Proof of Theorem \ref{mainthST}]
We start with the observation that every point $(\beta,u)$ (with $\beta>0$) is in at least one
of ${\cal R}_{FP}$ or ${\cal R}_{D}$ or ${\cal R}_{PP}$.  
To see this, suppose  $(\beta,u)\not\in{\cal R}_{PP}$. Then 
$\max\{\phi(\beta),\beta u+\log d_1\}\geq \overline{f}^{ST}(\beta,u)\geq \underline{f}^{ST}(\beta,u)$.
But $\underline{f}^{ST}(\beta,u)\geq \max\{\phi(\beta),\beta u+\log d_1\}$ by Proposition \ref{prop-LB0},
so the limit $f^{ST}(\beta,u)$ exists  and equals  $\beta u+\log d_1$ or $\phi(\beta)$.  That is,
$(\beta,u)\in {\cal R}_{FP}\cup{\cal R}_{D}$.

To prove part (a),  fix  $\beta\leq \beta_c$.
By    Corollary \ref{subtreecor1}, the limiting free energy exists and is given by
\[    f^{ST}(\beta,u)    \;=\;   \max\{\phi(\beta),\beta u+\log d_1\}  \,.   \]
First assume $u\geq F(\beta)$. This is equivalent to  
$\beta u+\log d_1 \geq \lambda(\beta)+\log d$.  Since 
$\phi(\beta)=\lambda(\beta)+\log d$, we obtain $f^{ST}(\beta,u)=\beta u+\log d_1$, and 
hence $(\beta,u)\in {\cal R}_{FP}$.  Similarly, the assumption $u\leq F(\beta)$ leads
to $\beta u+\log d_1\leq\lambda(\beta)+\log d=\phi(\beta)$, and hence $f^{ST}(\beta,u)=\phi(\beta)$,
i.e.\ $(\beta,u)\in {\cal R}_{D}$.

Now we shall prove part (b).  Fix $\beta >\beta_c$.

First assume $u\geq F(\beta)$.  Then $\beta u+\log d_1\geq \lambda(\beta)+\log d \geq  \phi(\beta)$
(the second inequality is from Equation (\ref{eq.phileqlam})).  By Propositions \ref{prop-LB0}
and \ref{prop-UB0}, we have
\[    \underline{f}^{ST}(\beta,u) \;\geq \;  \beta u+\log d_1 \;\geq \; \overline{f}^{ST}(\beta,u)  \,, \]
which shows that $f^{ST}$ exists and equals $\beta u+\log d_1$.  Therefore $(\beta,u)\in {\cal R}_{FP}$.

Next, assume that $u\leq F(\beta_c)$.  Then Proposition \ref{prop-desorblargebeta} says that
$(\beta,u)\in {\cal R}_{D}$, since $F(\beta_c)=\Psi$.

Next, assume that $J(\beta)<u<F(\beta)$.   Let 
\begin{eqnarray}
    \label{eq.deftstar}
    t^* \;\equiv \; t^*(\beta)  & = &   \frac{\lambda(2\beta)-2\lambda(\beta)-\log d}{
         \log d_1+\lambda(2\beta)-2\lambda(\beta)-\log d}     \hspace{5mm}\hbox{and}   \\
         \label{eq.defLbt}
         L(\beta,t)  & = & \phi(\beta)-\left(\frac{1-t}{t}\right)(\lambda(\beta)+\log d-\phi(\beta))
            \hspace{5mm}\hbox{for $t\neq 0$}.
\end{eqnarray}
By Lemma \ref{lem.Theta}(a,b), we know that $\lambda(2\beta)-2\lambda(\beta)-\log d>0$,
which implies that $t^*\in (0,1)$. Also, observe  that 
\begin{equation}
    \label{eq.JL}
         J(\beta)   \;=\; \frac{L(\beta,t^*)-\log d_1}{\beta} \,.   
\end{equation}
Recalling from Lemma \ref{lem.Theta}(b) that $\Theta =de^{\lambda(2\beta)}$, it is easy to 
show that 
\[     
        \left(\frac{\Theta}{d^2e^{2\lambda(\beta)}} \right)^{1-t^*} \;=\,  d_1^{\,t^*} \,,
 \]
and that the inequality of Equation (\ref{eq.tdef}) holds whenever $t^*<t<1$.

Still assuming  $J(\beta)<u<F(\beta)$, we consider two possible cases:
\begin{verse}
($i$)   $\phi(\beta)\leq \beta u+\log d_1$, or
\\
($ii$)   $\beta u+\log d_1<\phi(\beta)$.
\end{verse}
If case ($i$) holds, then Proposition \ref{prop-ineqex} says that
$(\beta,u)\in{\cal R}_{PP}$.  So we shall assume that case ($ii$) holds.  
Since 
$(L(\beta,t^*)-\log d_1)/\beta<u$, we can choose $t\in (t^*,1)$ such that 
$(L(\beta,t)-\log d_1)/\beta<u$.   From simple algebra, it follows that, for this value of $t$, 
the right hand side
of Equation (\ref{eq.LBt1conc}) is strictly greater than $\phi(\beta)$, which 
in turn equals $\max\{\phi(\beta),\beta u+\log d_1\}$ in case ($ii$).  
For this choice of $t$, Lemma \ref{lem-LBt1} shows
that  $(\beta,u)\in{\cal R}_{PP}$.
%as in the proof of Proposition \ref{prop-ineqex}.

Finally, assume that $F(\beta_c)<u\leq J(\beta)$.  
Since $\phi(\beta)<\lambda(\beta)+\log d$  (by Equation (\ref{eq.philtlam})), 
we see from Equation (\ref{eq.defLbt}) that $L(\beta,t^*)<\phi(\beta)$.
We then have $u\leq J(\beta)<(\phi(\beta)-\log d_1)/\beta$ (by Equation (\ref{eq.JL})), and hence
$\beta u+\log d_1<\phi(\beta)$.  Therefore, by Proposition \ref{prop-LB0}, 
we have $\underline{f}^{ST}(\beta,u)\geq \phi(\beta)>\beta u+\log d_1$.  Hence $(\beta,u)$ is not 
in ${\cal R}_{FP}$, so it must be in ${\cal R}_{D}$ or ${\cal R}_{PP}$ by the first observation of the
present proof.
\end{proof}

\section{Discussion of the Results and Some Future Research Directions}
\label{finalsec}
In this section, we will discuss our results and also mention some possible future research directions.

\subsection{Polymers on disordered trees with a shifted-disordered defect subtree}
Let's assume that $\tilde V(x)=V(x)+u$ for $x\in \tilde \T$, so that  the Hamiltonian is given by  
\[V\langle W\rangle   \;:=\;  \sum_{y\in \tilde{\T}\cap (W\setminus \{\roott\})} (V(y)+u)+\sum_{y\in (\T\setminus \tilde \T)\cap (W\setminus \{\roott\})} V(y)\,.    \]
Note that the localized microscopic defect was non-random in the model introduced in section~\ref{subtreesec}. 

We denote the partition function of this model by
\begin{equation}
\label{eq.subtreepf2}
\tilde Z_n^{ST}(\beta,u):=Z^{[\roott]}_n(\beta).
\end{equation}
Then for $u\geq 0$, we have
\begin{equation}
\label{eq.lowerbnd}
\liminf_{n\to \infty}\frac{1}{n}\log \tilde Z_n^{ST}(\beta,u) \geq \max(\phi(\beta),\beta u+\tilde \phi(\beta))  \hspace{5mm}a.s.
\end{equation}
where 
\begin{equation}
\label{eq.qfreetilde}
\tilde \phi(\beta)=\left\{\begin{array}{ll}
\lambda(\beta) +\log d_1 &\hbox{ if } \quad \beta <\beta_c\\
\frac{\beta}{\beta_c}(\lambda(\beta_c)+\log d_1) &\hbox{ if }\quad  \beta \geq \beta_c.
\end{array}\right.
\end{equation}
To see that the left side of the Equation~(\ref{eq.lowerbnd}) is greater than $\phi(\beta)$,  we restrict the partition function to $\T \setminus \tilde \T$; and to see that it is greater than $\beta u+\tilde \phi(\beta)$,
we restrict the partition function to $\tilde \T$.

We don't yet know much more about this general model but it
is reasonable to suspect a  phase diagram similar to Figure~\ref{phasediagramfig}.

\subsection{Directed polymers on disordered integer lattice $\mathbb{Z}^d$ with a defect line} 
\label{latticemodel}
The $1+d$ dimensional \textit{lattice} version of the directed polymer in a random environment is formulated as follows.

\begin{figure}
\centering
\begin{tikzpicture}[scale=0.8]
\draw [help lines, dashed] (0,0) grid (10,6);
\draw [->] (0,3) -- (10.5,3);
\draw [->] (0,0) -- (0,6.5);

\draw [ultra thick] (0,3)--(1,4)--(3,2)--(4,3)--(5,2)--(8,5)--(9,4)--(10,5);

\foreach \a in {0,1,...,10}
{
  \foreach \b in {0,1,...,6}
  {
      \filldraw[opacity=rnd] (\a, \b) circle(2pt);
  }
}

\foreach \c in {0,1,...,10}
{\draw (\c,3) circle(4.5pt);}
\end{tikzpicture}
\caption{A $1+1$ dimensional directed polymer in a random environment with a defect line. The polymer configurations are represented by directed random walk paths. Each site of the lattice $\mathbb{Z}^2$ is assigned a random variable which represent the \textit{bulk disorder}. The sites on the $x$-axis carries an extra potential $u$ which represents the \textit{defect line}. It is not clear for a given $\beta$ whether $u_c(\beta)>0$ or $u_c(\beta)=0$, defined so that for $u>u_c(\beta)$ the polymer places a positive fraction of its monomers on the $x$-axis.}  
\end{figure}
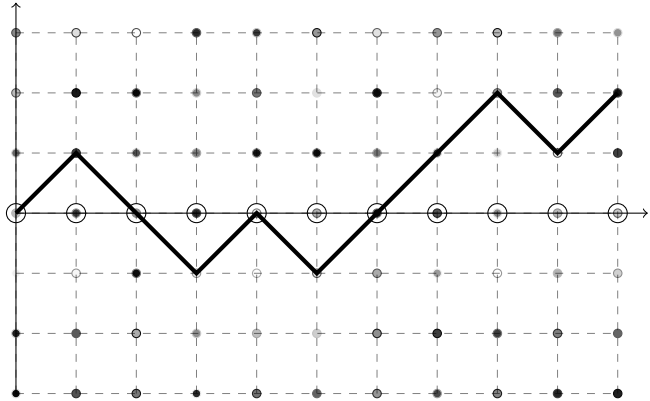

The polymer configurations are represented by the directed paths of the simple symmetric random walk (SSRW) $\{(j, S_j)\}_{j=1}^{n}$ in $\mathbb{N} \times \mathbb{Z}^d$.  The disordered random environment is given by i.i.d.~random variables $\{v(i,x) : i \geq 1, x \in \mathbb{Z}^d\}$ with law denoted by $\mathbf{P}$ satisfying 
\begin{equation}\label{expm1}
\lambda(\beta)=\log \mathbf{E}[e^{\beta v(i,x)}]< \infty \quad \hbox{for all}  \quad \beta \in \mathbb{R}.
\end{equation}
The Hamiltonian of the model is given by
\begin{equation}
  H_n(S):=\sum_{j=1}^{n}v(j, S_j)
\end{equation}
and $Z_n(\beta):=\sum_{S}e^{\beta H_n(S)}$
denotes the \textit{partition function} where the sum is overall SSRW paths of length $n$ with $S_0=0$. The free energy of the model is defined as
\begin{equation}
\label{Lqfree}
f(\beta):=\lim_{n\to \infty}\frac{1}{n}\log Z_n(\beta).
\end{equation}
The existence of the free energy is first proven by Carmona and Hu \cite{CaHu1} for the Gaussian environment and then for any distribution which satisfies the exponential moment condition in Equation~(\ref{expm1}) by Comets et al. in \cite{CSY1}. There is no explicit expression for the free energy for the lattice case as opposed to the tree case as in Equation~(\ref{qfree}).

The first rigorous mathematical work on the directed polymers in $1+d$ dimensions was done by Imbrie and Spencer \cite{IS}, proving that in dimension $d\geq 3$ with Bernoulli disorder and small enough $\beta,$ the end point of the polymer scales as $n^{1/2},$ i.e. the polymer is diffusive. Later, Bolthausen \cite{Bolt} extended this to a central
 limit theorem for the end point of the walk, showing that the polymer behaves almost as if the disorder were absent. In the same paper, Bolthausen also introduced the nonnegative martingale $M_n(\beta)=Z_n(\beta)/\mathbf{E}[Z_n(\beta)]$ and observed that for the positivity of the limit $M(\beta)=\lim_{n\to \infty}M_n(\beta)$, there are only two possibilities, $\mathbf{P}(M(\beta)>0)=1$, known as \emph{weak disorder}, or $\mathbf{P}(M(\beta)=0)=1$, known as \emph{strong disorder}.
Comets and Yoshida \cite{CSY1, CY}, showed that there exists a critical value $\beta_{c}\in [0,\infty]$, with 
$\beta_c =0 $ for $d=1,2$ and  $0<\beta_c \leq \infty$ for $ d \geq 3$, such that 
$\mathbf{P}(M(\beta)>0)=1$  if $ \beta \in \{0\}\cup(0,\beta_c)$ and $\mathbf{P}(M(\beta)=0)=1$  if $ \beta >\beta_c.$ In particular, for the $1+1$ dimensional case, disorder is always strong. It is not known whether $\beta_c$ belongs to the weak disorder or strong disorder phase for the lattice version, whereas we know that $\beta_c$ belongs to the strong disorder phase for the tree case.

In the $1+1$ dimensional case, the \textit{localized microscopic defect} is incorporated to the model by modifying the Hamiltonian as follows:

\begin{equation}
  H^u_n(S):=\sum_{j=1}^{n}(v(j, S_j)+u1_{S_j=0})
\end{equation}
and $Z_n(\beta,u):=\sum_{S}e^{\beta H^u_n(S)}$
denotes the partition function. The free energy and the critical curve of the model are defined as
\begin{equation}
\label{Lqfree}
f(\beta,u):=\lim_{n\to \infty}\frac{1}{n}\log Z_n(\beta,u)
\end{equation}
\[u_c(\beta):=\inf\{u\geq0: f(\beta,u)>f(\beta,0)\}.\]
For the existence of the limit in Equation~(\ref{Lqfree}) and its self-averaging property, see~\cite{AlYil}.
As we discussed in section~\ref{bulkvsmd}, the question of whether $u_c(\beta)>0$ or not for some range of $\beta$ is still an open question. One of the reasons why it is not easy to solve this question rigorously is that a nice decomposition, such as in Equation~(\ref{eq.ZsumG1}), is not available for the partition function of the lattice model.

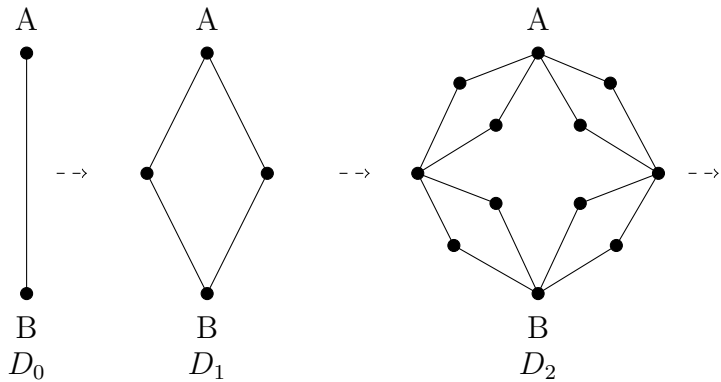
\begin{figure}
\centering
\begin{tikzpicture}[scale=0.8]
\draw (0,0) -- (0,4);
\draw [fill] (0,0) circle [radius=0.1];
\node [below] at (0,-0.2) {B};
\node [below] at (0,-0.8) {$D_0$};
\draw [fill] (0,4) circle [radius=0.1];
\node [above] at (0,4.2) {A};
\draw[->, dashed] (0.5,2)--(1,2);
\draw[->, dashed] (5.2,2)--(5.7,2);
\draw[->, dashed] (11,2)--(11.5,2);
\draw (2,2) -- (3,4);
\draw (3,0) -- (2,2);
\draw (4,2) -- (3,4);
\draw (3,0) -- (4,2);
\draw [fill] (2,2) circle [radius=0.1];
\draw [fill] (4,2) circle [radius=0.1];
\draw [fill] (3,0) circle [radius=0.1];
\node [below] at (3,-0.2) {B};
\node [below] at (3,-0.8) {$D_1$};
\draw [fill] (3,4) circle [radius=0.1];
\node [above] at (3,4.2) {A};
\draw [fill] (8.5,0) circle [radius=0.1];
\node [below] at (8.5,-0.2) {B};
\node [below] at (8.5,-0.8) {$D_2$};
\draw [fill] (8.5,4) circle [radius=0.1];
\node [above] at (8.5,4.2) {A};
\draw [fill] (6.5,2) circle [radius=0.1];
\draw [fill] (10.5,2) circle [radius=0.1];
\draw [fill] (7.2,3.5) circle [radius=0.1];
\draw [fill] (7.8,2.8) circle [radius=0.1];
\draw (6.5,2)--(7.2,3.5);
\draw (6.5,2)--(7.8,2.8);
\draw (7.2,3.5)--(8.5,4);
\draw (7.8,2.8)--(8.5,4);
\draw [fill] (9.7,3.5) circle [radius=0.1];
\draw [fill] (9.2,2.8) circle [radius=0.1];
\draw (10.5,2)--(9.7,3.5);
\draw (10.5,2)--(9.2,2.8);
\draw (9.7,3.5)--(8.5,4);
\draw (9.2,2.8)--(8.5,4);
\draw (6.5,2)--(7.1,0.8);
\draw (6.5,2)--(7.8,1.5);
\draw (7.1,0.8)--(8.5,0);
\draw (7.8,1.5)--(8.5,0);
\draw [fill] (7.1,0.8) circle [radius=0.1];
\draw [fill] (7.8,1.5) circle [radius=0.1];
\draw (10.5,2)--(9.8,0.8);
\draw (10.5,2)--(9.2,1.5);
\draw (9.8,0.8)--(8.5,0);
\draw (9.2,1.5)--(8.5,0);
\draw [fill] (9.8,0.8) circle [radius=0.1];
\draw [fill] (9.2,1.5) circle [radius=0.1];
\end{tikzpicture}
\caption{The recursive construction of the first three generations of the diamond lattice $D_n$, a special case of the hierarchical lattice for $b=2, s=2$. Each site of the lattice carries a random disorder, and directed paths from A to B represent the polymer configurations.} 
\label{fig-hierar}
\end{figure}
\begin{figure}
\centering
\begin{tikzpicture}[scale=0.8]
\draw [fill] (8.5,0) circle [radius=0.1];
\node [below] at (8.5,-0.2) {B};
\draw [fill] (8.5,4) circle [radius=0.1];
\node [above] at (8.5,4.2) {A};
\draw [fill] (6.5,2) circle [radius=0.1];
\draw [fill] (10.5,2) circle [radius=0.1];
\draw [fill] (7.2,3.5) circle [radius=0.1];
\draw [fill] (7.8,2.8) circle [radius=0.1];
\draw[ultra thick] (6.5,2)--(7.2,3.5);
\draw (6.5,2)--(7.8,2.8);
\draw [ultra thick](7.2,3.5)--(8.5,4);
\draw (7.8,2.8)--(8.5,4);
\draw [fill] (9.7,3.5) circle [radius=0.1];
\draw [fill] (9.2,2.8) circle [radius=0.1];
\draw (10.5,2)--(9.7,3.5);
\draw (10.5,2)--(9.2,2.8);
\draw (9.7,3.5)--(8.5,4);
\draw (9.2,2.8)--(8.5,4);
\draw [ultra thick](6.5,2)--(7.1,0.8);
\draw (6.5,2)--(7.8,1.5);
\draw [ultra thick](7.1,0.8)--(8.5,0);
\draw (7.8,1.5)--(8.5,0);
\draw [fill] (7.1,0.8) circle [radius=0.1];
\draw [fill] (7.8,1.5) circle [radius=0.1];
\draw (10.5,2)--(9.8,0.8);
\draw (10.5,2)--(9.2,1.5);
\draw (9.8,0.8)--(8.5,0);
\draw (9.2,1.5)--(8.5,0);
\draw [fill] (9.8,0.8) circle [radius=0.1];
\draw [fill] (9.2,1.5) circle [radius=0.1];
\end{tikzpicture}
\caption{Hierarchical lattice with a defect branch. The thick bonds represent the defect branch. The disordered variables along the defect branch is enhanced by a fixed potential $u$. The polymer will follow the defect branch for $u>u_c(\beta,b,s)$ depending on the inverse temperature $\beta$ and the lattice parameters $b$ and $s$.}
\label{fig-hierardef}
\end{figure}
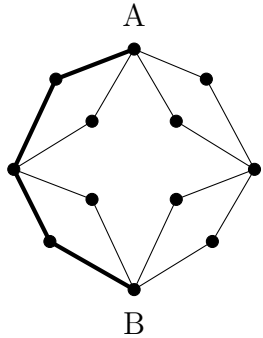
\subsection{Directed polymers on disordered hierarchical lattices with defect substructure} 
The directed polymers on disordered hierarchical lattices were first introduced and studied in the 
physics literature by Derrida and Griffiths \cite{DerGr}, and Cook and Derrida \cite{CoDer} for the \textit{bond} disordered case, and then rigorously by Lacoin and Moreno \cite{LaMo} for the \textit{site} disordered case. The hierarchical lattices are usually generated by an iterative rule as described for the \textit{diamond} lattice: The first generation, $D_0$, consists of two sites, labeled as $A$ and $B$, with one bond. In the next generation, $D_1$, the bond is replaced by a set of four bonds, and then in each step, each bond is replaced by such a set of four bonds to form the next generation, see Figure~\ref{fig-hierar}.
For more general hierarchical lattices, the generation $D_{n+1}$ is obtained by replacing each bond in the generation $D_n$ by $b$ branches of $s$ bonds. The directed paths in $D_n$ linking the sites $A$ and $B$ represent the polymer configurations. The disorder is introduced in the model by assigning independent random variables from a distribution to each site. The Hamiltonian of the model, partition function, and free energy are defined as in lattice and tree version of the model, and the martingale defined by the normalized partition function separates two phases as \textit{weak} and \textit{strong} disorder depending on the lattice parameters $b, s$ and the inverse temperature $\beta$, for the details see \cite{LaMo}. In \cite{LaMo}, in particular, they prove that the free energy exists almost surely and it is a strictly convex function of $\beta$ which holds also for the directed polymer on $\mathbb{Z}^d$ but not on the tree for $\beta>\beta_c$. As noted in \cite{LaMo}, this  fact is related to the ``correlation structure'' of the models as two directed paths on $\mathbb{Z}^d$ and hierarchical lattice can re-intersect after being separated at some point which is not the case for the tree model. Among these three directed polymer models, only the tree version is exactly solvable, that is, there is an explicit expression for the free energy. 

The localized microscopic defect is incorporated to the model by enhancing the disorder variables along a single directed path from A to B with a fixed potential $u$, see Figure~\ref{fig-hierardef}. The main question is determining whether the critical point for the extra potential is zero or not depending on the model parameters, inverse temperature $\beta$, and lattice parameters $b,s$; that is whether $u_c(\beta,b,s)=0$ or not for some $\beta, b, s$. This problem was studied in \cite{BK1} by using Migdal-Kadanoff renormalization group method but the results lack the rigor of formal mathematical proofs.

\ack
N. Madras was supported in part by a Discovery Grant from NSERC of Canada.

\section*{References}

\end{document}